\tikzstyle{densely dotted}=[dash pattern=on \pgflinewidth off 0.5pt]
\tikzset{anchorbase/.style={baseline={([yshift=-0.5ex]current bounding box.center)}},
tinynodes/.style={font=\tiny,text height=0.25ex,text depth=0.05ex},
smallnodes/.style={font=\scriptsize,text height=0.75ex,text depth=0.15ex},
usual/.style={line width=0.9,color=black},
dusual/.style={line width=0.9,color=spinach,densely dashed},
pole/.style={line width=3.0,color=specialgray},
crossline/.style={preaction={draw=white,line width=5.0pt,-},preaction={draw=black,line width=0.9pt,-}},
crosspole/.style={preaction={draw=white,line width=6.0pt,-},preaction={draw=specialgray,line width=3.0pt,-}},
mor/.style={line width=0.75,color=black,fill=cream},
blob/.style={circle,fill,minimum size=5.0pt,inner sep=0pt,outer sep=0pt},
blobed/.style n args={3}{decoration={markings,post length=0.5mm,pre length=0.5mm,
mark=at position #1 with {\node[blob,#3,label=left:$#2\!$]at (0,0){};}
},postaction={decorate}},
rblobed/.style n args={3}{decoration={markings,post length=0.5mm,pre length=0.5mm,
mark=at position #1 with {\node[blob,#3,label=right:$\!#2$]at (0,0){};}
},postaction={decorate}},
}
\newcommand{\tikzdiagh}[2][]{\tikz[#1,thick,baseline={([yshift=1ex+#2]current bounding box.center)}]}
\newcommand{\tikzdiagc}[1][]{\tikzdiagh[#1]{-1ex}}
\tikzstyle{tikzdot}=[fill, circle, inner sep=2pt]
\tikzstyle{smoltikzdot}=[fill=white, draw=black, circle, inner sep=1pt]
\tikzset{
    partial ellipse/.style args={#1:#2:#3}{
        insert path={+ (#1:#3) arc (#1:#2:#3)}
    }
}
\definecolor{myblue}{rgb}{0,.5,1}
\definecolor{myred}{rgb}{0.9,0,0}
\definecolor{mygreen}{rgb}{0,0.7,0}
\newcommand{\rB}{\mathrm{B}}
\newcommand{\rT}{\mathrm{T}}
\newcommand{\Bi}{\textcolor{blue}{\rB_i}}
\newcommand{\Bii}{\textcolor{myred}{\rB_{i-1}}}
\newcommand{\Bdd}{\textcolor{mygreen}{\rB_{d-1}}}
\newcommand{\Rrho}{\textcolor{violet}{\rT_{\rho}}}
\newcommand{\Rrhoi}{\textcolor{violet}{\rT_{\rho}^{-1}}}
\newcommand{\Ti}{\textcolor{blue}{\rT_i}}
\newcommand{\Tim}{\textcolor{blue}{\rT_i^{-1}}}
\newcommand{\Tipm}{\textcolor{blue}{\rT_i^{\pm 1}}}
\newcommand{\Timp}{\textcolor{blue}{\rT_i^{\mp 1}}}
\newcommand{\Tii}{\textcolor{myred}{\rT_{i-1}}}
\newcommand{\Tiim}{\textcolor{myred}{\rT_{i-1}^{-1}}}
\newcommand{\Timo}{\textcolor{myred}{\rT_{i+1}}}
\newcommand{\Timom}{\textcolor{myred}{\rT_{i+1}^{- 1}}}
\newcommand{\Tiipm}{\textcolor{myred}{\rT_{i+1}^{\pm 1}}}
\newcommand{\Tiimp}{\textcolor{myred}{\rT_{i+1}^{\mp 1}}}
\newcommand{\Saff}{\widehat{\eS}}
\newcommand{\Sext}{\widehat{\eS}^{\mathrm{ext}}}
\newcommand{\BSext}{\widehat{\eBS}^{\mathrm{ext}}}
\DeclareMathOperator{\ev}{ev}
\newcommand{\Ev}{\mathcal{E}v}
\newcommand{\vastl}[1]{\left(\rule{#1 cm}{.9cm}\right.}
\newcommand{\vastr}[1]{\left.\rule{#1 cm}{.9cm}\right)}
\newcommand{\affh}{\widehat{H}_d}
\newcommand{\eaffh}{\widehat{H}^{ext}_d}
\newcommand{\Sy}{\mathfrak{S}}
\newcommand{\kb}{{\eK^b}}
\newtheorem{thm}{Theorem}[section]
\newtheorem{lem}[thm]{Lemma}
\newtheorem{cor}[thm]{Corollary}
\newtheorem{prop}[thm]{Proposition}
\theoremstyle{definition}
\newtheorem{defn}[thm]{Definition}
\newtheorem{rem}[thm]{Remark}
\newcommand{\bN}{\mathbb{N}}
\newcommand{\bC}{\mathbb{C}}
\newcommand{\eA}{\EuScript{A}}
\newcommand{\eC}{\EuScript{C}}
\newcommand{\eK}{\EuScript{K}}
\newcommand{\eS}{\EuScript{S}}
\newcommand{\eBS}{\EuScript{BS}}
\newcommand{\cC}{\mathcal{C}}
\DeclareMathOperator{\id}{Id}
\newcommand{\xra}[1]{\xrightarrow{#1}}
\long\def\@makecaption#1#2{%
    \vskip 10pt
    \setbox\@tempboxa\hbox{%
\small{#1: }\ignorespaces #2}%
    \ifdim \wd\@tempboxa >\captionwidth {%
        \rightskip=\@captionmargin\leftskip=\@captionmargin
        \unhbox\@tempboxa\par}%
      \else
        \hbox to\hsize{\hfil\box\@tempboxa\hfil}%
    \fi}
\newdimen\@captionmargin\@captionmargin=2\parindent
\newdimen\captionwidth\captionwidth=\hsize
\let\fullref\autoref
\def\makeautorefname#1#2{\expandafter\def\csname#1autorefname\endcsname{#2}}
\title{Evaluation birepresentations of affine type $A$ Soergel bimodules}
\author{Marco Mackaay}
\address{M.M.: Center for Mathematical Analysis, Geometry, and Dynamical Systems, Departamento de Matemática, Instituto Superior Técnico, 1049-001 Lisboa, PORTUGAL \&
Departamento de Matemática, FCT, Universidade do Algarve, Campus de Gambelas,
8005-139 Faro, PORTUGAL, \newline \href{https://fct.ualg.pt/bio/mmackaay}{https://fct.ualg.pt/bio/mmackaay}, \href{https://orcid.org/0000-0001-9807-6991}{ORCID 0000-0001-9807-6991}}
\email{mmackaay@ualg.pt}
\author{Vanessa Miemietz}
\address{V.M.: School of Mathematics, University of East Anglia, Norwich NR4 7TJ, United Kingdom,  \newline \href{https://archive.uea.ac.uk/~byr09xgu/}{https://archive.uea.ac.uk/~byr09xgu/}}
\email{v.miemietz@uea.ac.uk}
\author{Pedro Vaz}
\address{P.V.: Institut de Recherche en Math{\'e}matique et Physique, 
Universit{\'e} catholique de Louvain, Chemin du Cyclotron 2,  
1348 Louvain-la-Neuve, Belgium, \newline \href{https://perso.uclouvain.be/pedro.vaz}{https://perso.uclouvain.be/pedro.vaz}, \href{https://orcid.org/0000-0001-9422-4707}{ORCID 0000-0001-9422-4707}}
\email{pedro.vaz@uclouvain.be}
\begin{document}
%
%
%
\begin{abstract} In this paper, we use Soergel calculus to define a monoidal functor, called the evaluation functor, 
from extended affine type $A$ Soergel bimodules to the homotopy category of bounded complexes 
in finite type $A$ Soergel bimodules. This functor categorifies the well-known evaluation homomorphism 
from the extended affine type $A$ Hecke algebra to the finite type $A$ Hecke algebra. 
Through it, one can pull back the triangulated birepresentation induced by any finitary birepresentation of finite type 
$A$ Soergel bimodules to obtain a triangulated birepresentation of extended affine type $A$ Soergel bimodules. 
We show that if the initial finitary birepresentation in finite type $A$ is a cell birepresentation, 
the evaluation birepresentation in extended affine type $A$ has a finitary cover, which we illustrate by working out 
the case of cell birepresentations with subregular apex in detail.     
\end{abstract}
\maketitle

{\hypersetup{hidelinks}
\tableofcontents 
}
%
%
\pagestyle{myheadings}
\markboth{\em\small M.~Mackaay, V.~Miemietz, P.~Vaz}{\em\small  Evaluation birepresentations}
%
%

\section{Introduction}

Finitary birepresentation theory of finite type Soergel bimodules in characteristic zero 
has been a topic of intensive study, with many interesting results, in the last couple 
of years~\cite{kmmz,mackaay-mazorchuk,mmmtz2019,mackaay-tubbenhauer,zimmermann}. 
In this paper, we initiate the study of a class of finitary and triangulated birepresentations 
of affine type $A$ Soergel bimodules. The bicategories of these Soergel bimodules are no longer finitary and, therefore, new phenomena show up in their birepresentation theory. For example, there are no known interesting triangulated birepresentations in finite type, whereas we do give examples of such birepresentations in affine type $A$. 

To describe these, let us briefly recall the decategorified setting first. In type $A$, 
as is well-known, there are evaluation maps from the affine Hecke algebra 
to the finite type Hecke algebra. These are homomorphisms of algebras, 
so any representation of the latter algebra can be pulled back to a representation 
of the former algebra through such a map. These so-called {\em evaluation representations} 
form an important and well-studied class of finite-dimensional representations of affine type $A$ Hecke algebras, see e.g.~\cite{chari-pressley, dufu, lnt} and references therein.  
 
Several authors (\cite[Introduction]{mackaay-thiel} and \cite[Section 1.6]{elias2018}) 
have conjectured that these evaluation maps can be categorified by 
monoidal {\em evaluation functors} (i.e., pseudofunctors between one-object bicategories) from affine type $A$ Soergel bimodules to  the homotopy category of bounded complexes in finite type $A$ Soergel bimodules. In this paper, we indeed define such functors and use them to categorify the aforementioned 
evaluation representations in the form of triangulated birepresentations, obtained by 
pulling back the triangulated birepresentations induced by finitary birepresentations 
of finite type $A$ Soergel bimodules through these functors. Moreover, in case 
the original finitary birepresentation is simple transitive, we show that the evaluation 
birepresentation admits a {\em finitary cover}, i.e., a finitary birepresentation 
together with an essentially surjective and epimorphic morphism of 
additive birepresentations from that cover to the evaluation birepresentation. 
This categorifies the well-known fact that the corresponding 
evaluation representations are quotients of certain 
cell representations defined by Graham and Lehrer~\cite{Graham-Lehrer}.  

Let us finish this introduction with a disclaimer. We do not present a theory of triangulated birepresentations in this paper. First of all, it is not yet clear 
whether our evaluation functors can be extended to triangulated functors between the homotopy category of bounded complexes in affine type $A$ Soergel bimodules and its counterpart in finite type $A$. Proving the existence of such an extension is a non-trivial exercise in obstruction theory, which will have to be addressed in the future. This extension 
problem was first mentioned in \cite[Section 16]{elias2018}, where it is conjectured to be solvable, and a similar problem will have to be solved in order to prove~\cite[Conjecture 1.2]{AL-ELR} for the categorification of the internal braid group action on quantum groups.  Secondly, some ingredients for a theory of 
triangulated birepresentations can already be found in the literature, e.g.~\cite{elias2018, elias-hogancamp, hogancamp, laugwitz-miemietz, stevenson}, but many foundational results are still missing. In general, it is not clear which parts of finitary birepresentation theory, e.g. the notion of simple transitive birepresentation, the categorical (weak) Jordan-H\"older theorem, the relation with (co)algebra $1$-morphisms, the double-centralizer theorem (see~\cite{mmmtz2020} and references therein), 
generalize to the triangulated setting and/or in which form exactly. These questions need 
to be answered first, before one can even think of categorifying the induction product 
of evaluation representations from~\cite[Section 2.5]{lnt}. Finally, all of this is just 
for affine type $A$. Hecke algebras of other affine Coxeter types also have interesting 
finite-dimensional representations, but there are no evaluation morphisms in those cases, so other ideas will be needed to categorify those representations. In other words, the results in this paper are (hopefully) just the tip of a (tricky) triangulated iceberg.

\subsubsection*{Plan of the paper} 

In Section~\ref{sec:decatreminders}, we recall the basics of extended and 
non-extended affine Hecke algebras of affine type $A$, the evaluation maps, 
the Graham-Lehrer cell modules and the evaluation representations. Everything in this section 
is well-documented in the literature and we only recall the details that are needed in the 
rest of this paper.    

In Section~\ref{sec:soergel}, we briefly recall Soergel calculus in finite and 
affine type $A$, the latter both in the non-extended and the extended version. 
Again, nothing new is presented, so the specialists can skip this section and 
move on to the next one. Of course, in the remainder 
we often refer to the diagrammatic equations in this section, which is exactly why 
we recall them. 
   
In Section~\ref{sec:Rouquier}, we first recall some basic results on Rouquier complexes in 
finite type $A$ and then focus on a special type of Rouquier complex, 
which is fundamental for the definition of the evaluation functors in the next section. 
In particular, we develop a mixed diagrammatic calculus for morphisms between products of Bott-Samelson bimodules and these special Rouquier complexes, all in finite type $A$. 
To the best of our knowledge, this extension of the usual Soergel calculus is new.  
 
In Section~\ref{sec:evaluationfunctors}, we define the evaluation functors by 
assigning a bounded complex of finite type $A$ Soergel bimodules (or, more precisely, of 
finite type $A$ Bott-Samelson bimodules) to each extended affine type 
$A$ Bott-Samelson bimodule and a map 
between such complexes to each generating extended affine type $A$ 
Soergel calculus diagram. The main result of this section, and of this paper, 
is that this assigment is well-defined up to homotopy equivalence. 

In Section~\ref{sec:bireps}, we first introduce the notion of a triangulated birepresentation of an 
additive bicategory and define evaluation birepresentations of Soergel bimodules in extended 
affine type $A$, which are important examples. We then prove that 
each evaluation birepresentation has a (possibly non-unique)
finitary cover. Finally, we study in detail the simplest non-trivial evaluation birepresentations, 
which are the ones induced by cell birepresentations of finite type $A$ with subregular apex. 
As we show, these admit a simple transitive finitary cover whose underlying algebra 
is a signed version of the zigzag algebra of affine type $A$.

\subsubsection*{Acknowledgments} 
We thank the anonymous referee for carefully reading our paper and giving us for some very helpful comments. 

M.M. was supported in part by Funda\c{c}{\~a}o para a Ci\^{e}ncia e a
Tecnologia (Portugal), projects UID/MAT/04459/2013 (Center for Mathematical Analysis,
Geometry and Dynamical Systems - CAMGSD) and PTDC/MAT-PUR/31089/2017 (Higher Structures and Applications). V.M. was partially supported by EPSRC grant EP/S017216/1.
P.V. was supported by the Fonds de la Recherche Scientifique - FNRS under Grant no. MIS-F.4536.19.


%
%
\section{The decategorified story}\label{sec:decatreminders}

From now on, fix $d\in\bN_{\geq 3}$ and let 
$\widehat{I}:=\mathbb{Z}/d\mathbb{Z}$ and $I:=\{1,\ldots, d-1\}$. By a slight abuse of notation, we will often identify $\widehat{I}$ with the set of representatives 
$\{0,1,\ldots, d-1\}$ and consider $I$ as a subset of $\widehat{I}$. 

Let $\widehat{\Sy}_d$ be the affine Weyl group of type $\widehat{A}_{d-1}$. It is generated by $s_i, \; i\in \widehat{I},$ subject to relations
\begin{equation*}
  s_i^2 = 1, \mspace{40mu} s_is_j =s_js_i \mspace{10mu}\text{if}\mspace{10mu}
  \vert i-j\vert>1, \mspace{40mu} s_is_{i+1}s_i = s_{i+1}s_is_{i+1}, 
\end{equation*}
for $i\in \widehat{I}$. The {\em extended} affine Weyl group $\widehat{\Sy}_d^{\mathrm{ext}}$ is the semidirect product 
\[
\langle \rho \rangle \ltimes \widehat{\Sy}_d,
\]
where $\langle \rho \rangle$ is an infinite cyclic group generated by $\rho$ and 
\[
\rho s_i \rho^{-1} =s_{i+1},
\]
for $i\in \widehat{I}$. The finite Weyl group of type $A_{d-1}$ is the symmetric group on $d$ letters, $\Sy_d$, corresponding to the subgroup of $\widehat{\Sy}_d$ generated by $s_i,\; i\in I$.

\begin{rem} In some papers, the name {\em extended affine Weyl group} of type 
$\widehat{A}_{d-1}$ is used for the quotient of  $\widehat{\Sy}_d$ by the ideal 
generated by $\rho^d$. However, there are no evaluation maps from 
the extended affine Hecke algebra corresponding to that quotient to the finite type Hecke algebra, so we will not consider it in this paper.   
\end{rem}

\subsection{Hecke algebras}\label{sec:hecke}

Let $\Bbbk=\bC(q)$, where $q$ is a formal parameter. The \emph{extended affine Hecke} algebra $\eaffh$ is the $\Bbbk$-algebra generated by
$T_i,\; i\in \widehat{I},$ and $\rho^{\pm 1}$, with relations
\begin{gather}
  (T_i+q)(T_i-q^{-1}) = 0, \mspace{40mu} T_iT_j =T_jT_i \mspace{10mu}\text{if}\mspace{10mu}
  \vert i-j\vert>1, \mspace{40mu} T_iT_{i+1}T_i = T_{i+1}T_iT_{i+1}, 
 \label{eq:affHeckeSnrels}   \\
\rho\rho^{-1}=1=\rho^{-1}\rho,\mspace{40mu} \rho T_i \rho^{-1} = T_{i+1},  
    \label{eq:affHeckeRrels} 
    \end{gather}
for $i,j\in \widehat{I}$. Note that $T_i$ is invertible 
for every $i\in \widehat{I}$ with
\[
T_i^{-1}=T_i+q-q^{-1}.
\] 
As is well-known, $\eaffh$ is a $q$-deformation of the group 
algebra $\mathbb{C}[\widehat{\Sy}_d^{\mathrm{ext}}]$ with basis (the {\em regular basis})  given by 
$\{\rho^m T_w\mid m\in \mathbb{Z}, w\in \widehat{\Sy}_d\}$, where $T_w:=T_{i_1} \cdots T_{i_{\ell}}$ 
for any {\em reduced expression} (rex) $s_{i_1}\cdots s_{i_{\ell}}$ of $w$.

\smallskip

Another presentation is given in terms of the \emph{Kazhdan--Lusztig generators} $b_i:=T_i+q$, for $i\in \widehat{I}$, 
and $\rho^{\pm 1}$, subject to relations
\begin{gather}
  b_i^{2}= [2] b_i, \mspace{40mu} b_ib_j =b_jb_i \mspace{10mu}\text{if}\mspace{10mu}
  \vert i-j\vert>1, \mspace{40mu} b_ib_{i+1}b_i + b_{i+1} = b_{i+1}b_ib_{i+1} + b_{i}, 
 \label{eq:fdHeckebrels}   \\
\rho\rho^{-1}=1=\rho^{-1}\rho,\mspace{40mu} \rho b_i \rho^{-1} = b_{i+1}, 
    \label{eq:affHeckebrels} 
    \end{gather}
for $i\in \widehat{I}$, where $[2]:=q+q^{-1}$. Note that $T_i=b_i-q$ and $T_i^{-1}=b_i-q^{-1}$, for every 
$i\in \widehat{I}$. The {\em Kazhdan--Lusztig basis} is given by $\{\rho^m b_w \mid 
m\in \mathbb{Z}, w \in \widehat{\Sy}_d \}$, where $b_w$ is defined for an 
arbitrary rex of $w$ (and is independent of that choice). 

\smallskip 

The (non-extended) \emph{affine Hecke algebra} $\affh$ is the subalgebra of $\eaffh$ generated by either $T_i,\; i\in \widehat{I},$ subject to relations~\eqref{eq:affHeckeSnrels},
or $b_i,\; i\in \widehat{I},$ subject to relations~\eqref{eq:fdHeckebrels}. 

\smallskip

The \emph{finite Hecke algebra} $H_d$ is the $\Bbbk$-subalgebra of $\affh$ generated by 
either $T_i,\; i\in I,$ subject to relations~\eqref{eq:affHeckeSnrels}, or  
$b_i,\; i\in I\,$ subject to relations~\eqref{eq:fdHeckebrels}.

\subsection{Evaluation maps}
\begin{defn}\label{defn:evaluationmap}
For any $a\in\mathbbm{k}^{\times}$, there are two \emph{evaluation maps} 
$\ev_a, \ev'_a\colon\eaffh\to H_d $. These are defined as the homomorphisms of 
$\mathbbm{k}$-algebras determined by 
\begin{align}
  \ev_a(T_i) &= T_i,   \quad \mathrm{for}\;i\in I,
  \\
  \ev_a(\rho) &= a T_1^{-1}\dotsm T_{d-1}^{-1} 
\end{align}
and 
\begin{align}
  \ev'_a(T_i) &= T_i,   \quad \mathrm{for}\;i \in I,
  \\
  \ev'_a(\rho) &= a T_1\dotsm T_{d-1},
\end{align}
respectively. 
\end{defn}
The definition implies that 
\begin{equation}
\ev_a(T_0) =  \ev_a(\rho^{-1}T_1\rho) = T_{d-1}\dotsm T_2T_1 T_2^{-1} \dotsm 
T_{d-1}^{-1} 
\end{equation}
and 
\begin{equation}
\ev'_a(T_0) =  \ev'_a(\rho^{-1}T_1\rho) = T^{-1}_{d-1}\dotsm T^{-1}_2T_1 
T_2 \dotsm T_{d-1}, 
\end{equation}
so the restrictions of $\ev_a$ and $\ev'_a$ to $\affh$ do not depend on $a$. 

In terms of the Kazhdan--Lusztig generators we have
\begin{align}
  \ev_a(b_i)  &= b_i,\quad \mathrm{for}\; i \in I,\\
  \ev_a(b_0) &=  \ev_a(\rho^{-1}b_1\rho) = 
(b_{d-1}-q)\dotsm(b_{1}-q)b_1(b_1-q^{-1})\dotsm (b_{d-1}-q^{-1})  
\end{align}
and 
\begin{align}
  \ev'_a(b_i)  &= b_i,\quad \mathrm{for}\; i \in I,\\
  \ev'_a(b_0) &=  \ev'_a(\rho^{-1}b_1\rho) = 
(b_{d-1}-q^{-1})\dotsm(b_{1}-q^{-1})b_1(b_1-q)\dotsm (b_{d-1}-q).  
\end{align}
Another way of saying this is that the evaluation maps do not preserve the bar involution,  
but rather satisfy 
\begin{equation}\label{eq:rel-evaluation-maps}
\overline{\ev_a(x)}=\ev'_{\overline{a}}(\overline{x}),
\end{equation}
for any $x\in \eaffh$ and $a=a(q)\in \mathbbm{k}^{\times}$.

One can also define $\ev_a$ and $\ev'_a$ using a third presentation of $\eaffh$, called the \emph{Bernstein presentation}. In that presentation, $\eaffh$ is defined as some sort of semidirect product of $H_d$ and $\mathbbm{k}[Y_1^{\pm 1}\dotsc ,Y_d^{\pm 1}]$. However, there are several possible choices for the algebra of 
Laurent polynomials. In~\cite{elias2018}, 
two such choices are given with different variables: $y_1,\dotsc , y_d$ and 
$y_1^*,\dotsc ,y_d^*$ respectively. The interaction of $H_d$ and these polynomial algebras 
is defined by
\begin{align}
T_i^{-1}y_iT_i^{-1} &= y_{i+1}
\intertext{and}
T_i y_i^* T_i &= y_{i+1}^*, 
\end{align}  
respectively, for $i\in I$. 

The relation between these two Bernstein presentations and our first presentation of $\eaffh$ is 
given by 
\begin{align}
y_1 &= \rho T_{d-1}\dotsm T_{2}T_1 , \\
y_i &= T_{i-1}^{-1}\dotsm T_2^{-1}T_1^{-1}\rho T_{d-1}\dotsm T_{i+1}T_i, \quad i=2,
\ldots, d-1, 
\intertext{resp.}
y_1^* &= \rho T_{d-1}^{-1}\dotsm T_{2}^{-1}T_1^{-1}  , \\
y_i^* &= T_{i-1}\dotsm T_2T_1\rho T_{d-1}^{-1}\dotsm T_{i+1}^{-1}T_i^{-1}, \quad 
i=2,\ldots,d-1. 
\end{align}

It follows that the evaluation map $\ev_a\colon \eaffh\to H_d$ is the unique homomorphism of algebras sending $T_i$ to $T_i$, for $i\in I$, and $y_1$ to $a$, while $\ev'_a\colon \eaffh\to H_d$ is the unique homomorphism of algebras 
sending $T_i$ to $T_i$, for $i\in I$, and $y_1^*$ to $a$. The latter 
coincides with the flattening map $\flat$ in~\cite[\S 2.6]{elias2018} for $a=1$. 

We will categorify the evaluation map $\ev_a$ in~\fullref{sec:defevalfunctor}. The 
categorification of $\ev'_a$ is very similar and the relation between the two 
evaluation maps in~\eqref{eq:rel-evaluation-maps} also categorifies, 
since the categorification of the bar-involution is given by flipping diagrams upside-down, 
inverting the orientation of the differentials in complexes and changing the sign of homological 
and grading shifts. 

\begin{rem}
Some remarks about the various conventions in the literature are in order.
We try to follow conventions close to those in~\cite{elias2018}. 
Our presentation of the extended affine Hecke algebra in~\fullref{sec:hecke}
agrees with~\cite{elias2018}, as does the relation between the standard generators and the 
Kazhdan--Lusztig generators. Some authors use the inverse of $\rho$ in~\eqref{eq:affHeckeRrels}. 
Our choice of conventions implies the absence of certain powers of $q$ in the definition of the evaluation maps, in comparison with some of the sources in the literature. 
For more information on evaluation maps, see e.g.~\cite[\S5.1]{chari-pressley} and \cite[(5.0.2)]{dufu}. There are more possible evaluation maps, but we only consider these two in this paper. 
\end{rem}


\subsection{Graham-Lehrer cell modules}
Consider the $\widehat{A}_{d-1}$ Coxeter diagram $\widehat{\Gamma}_{d-1}$ with its vertices ordered counterclockwise and top vertex numbered $0$, e.g.    
\[
\dynkin [ordering=Kac,label,scale=2] A[1]7
\]
for $d=8$.
For any $z\in \mathbbm{k}^{\times}$, the {\em Graham-Lehrer cell module} 
$\widehat{M}_z$ of $\affh$ corresponding to $z$ and the partition $(d-1,1)$ 
has underlying vector space 
\begin{equation}
\widehat{M}_z:=\mathrm{Span}_{\mathbbm{k}}\left\{m_i \mid i\in \widehat{I}\right\} 
\end{equation}
and the action of $\affh$ 
on $\widehat{M}_z$ is given by  
\begin{equation}\label{eq:GL}
b_i m_j = 
\begin{cases}
[2] m_i, & \text{if}\; j\equiv i \bmod d;\\
z m_1, & \text{if}\; i-1\equiv 0\equiv j \bmod d;\\
z^{-1} m_0, & \text{if}\; i\equiv 0\equiv j-1 \bmod d;\\
m_j, &\text{if}\; i\equiv j\pm 1\bmod d,\;\text{but none of the above};\\
0, & \text{else}.
\end{cases}
\end{equation}

It is easy to see that $\widehat{M}_z$ is isomorphic to $W_{d-2,\pm \sqrt{z}}(d)$ 
in~\cite[Definition 2.6]{Graham-Lehrer}, where $m_i$ is identified with the cup 
diagram on a cylinder with $d-2$ straight lines and only one cup, whose endpoints are $i$ and $i+1$. When $i\ne 0$, the whole diagram corresponding to $m_i$ lives on the front part of the cylinder, but when $i=0$, the cup of $m_0$ goes around the back of the cylinder. Note that we have used $\delta=[2]$, rather than $\delta=-[2]$. As remarked in~\cite[text above Corollary 2.9.1]{Graham-Lehrer}, 
$W_{d-2,\sqrt{z}}(d)$ and $W_{d-2, -\sqrt{z}}(d)$ are isomorphic, which is clear 
from the fact that both are isomorphic to $\widehat{M}_z$. 

The Graham-Lehrer cell module $\widehat{M}_z$ can be made into an $\eaffh$-module, 
but not in a unique way. As a matter of fact, for each $\lambda\in \mathbbm{k}^{\times}$, 
we can define    
\begin{equation}\label{eq:GLext}
\rho\, m_j = \lambda z^{\delta_{j,0}}m_{j+1}, 
\end{equation}
for $j\in \widehat{I}$. It is easy to verify that this gives a well-defined action and 
we denote the corresponding Graham-Lehrer cell module of $\eaffh$ 
by $\widehat{M}_{z,\lambda}$. Note that the restriction of $\widehat{M}_{z,\lambda}$ to 
$\affh$ is equal to $\widehat{M}_z$, for all $\lambda\in \mathbbm{k}^{\times}$, and 
that the action of $\rho^{d}$ on $\widehat{M}_{z,\lambda}$ 
is simply multiplication by $\lambda^d z$. 

Graham and Lehrer~\cite[Theorem 2.8]{Graham-Lehrer} defined a 
$\mathbbm{k}$-bilinear form 
\begin{equation}
\langle \cdot , \cdot \rangle \colon \widehat{M}_z \otimes \widehat{M}_{z^{-1}}
\to \mathbbm{k},
\end{equation}
which in our notation is determined by    
\begin{equation}
\langle m_i, m_j\rangle =
\begin{cases}
[2], & \text{if}\; j \equiv i \bmod d;\\
z, & \text{if}\; i\equiv 0\equiv j-1 \bmod d;\\
z^{-1}, & \text{if}\; i-1\equiv 0\equiv j \bmod d;\\
1, &\text{if}\; i\equiv j\pm 1\bmod d,\;\text{but none of the above};\\
0, & \text{else}.
\end{cases}
\end{equation}
This induces a $\mathbbm{k}$-bilinear form on $\widehat{M}_{z,\lambda}\otimes 
\widehat{M}_{z^{-1},\lambda^{-1}}$, satisfying 
$\langle \rho^n b_w m_j, m_k\rangle = \langle m_j, b_w^{\star}\rho^{-n} m_k\rangle$, 
for any $w\in \widehat{W}$, $n\in \mathbb{Z}$ and $j,k\in \widehat{I}$, 
where $b_w^{\star}=b_{w^{-1}}$ is the dual Kazhdan-Lusztig basis element. Therefore, 
the radical of the bilinear form
\[
\mathrm{rad}(\langle \cdot, \cdot\rangle)=
\left\{m\in \widehat{M}_{z,\lambda}\mid \langle m,m'\rangle =0,\; \forall  
m'\in \widehat{M}_{z^{-1}, \lambda^{-1}} \right\} 
\]
is an $\eaffh$-submodule of $\widehat{M}_{z,\lambda}$. 
Graham and Lehrer~\cite[Theorem 2.8]{Graham-Lehrer} proved that the quotient module 
$\widehat{M}_{z}/\mathrm{rad}(\langle \cdot , \cdot \rangle)$ of $\affh$ is simple, 
and the same holds for the quotient module 
$\widehat{M}_{z,\lambda}/\mathrm{rad}(\langle \cdot , \cdot \rangle)$ of $\eaffh$, of course. 
A straightforward calculation shows that the radical of the bilinear form on 
$\widehat{M}_{z,\lambda}$ is zero unless $z=(-q)^{\pm d}$ (independently of $\lambda$), in which case it has dimension one and is generated by 
\begin{equation}
n_{\pm}:=\sum_{k=1}^{d} (-q)^{\mp k} m_k.
\end{equation}
Note that, when $z=(-q)^{\pm d}$, we have 
$\rho\, n_{\pm} =\lambda (-q)^{\pm 1} n_{\pm} $ and $b_i n_{\pm }=0$ for all $i\in \widehat{I}$. 

When $z=(-q)^{\pm d}$, put $\widehat{M}_{d, \lambda}^{\pm}:=
\widehat{M}_{(-q)^{\pm d}, \lambda^{\pm 1}}$ and let   
\begin{equation}\label{eq:Lplusminus}
\widehat{L}_{d, \lambda}^{\pm} :=\widehat{M}_{d, \lambda}^{\pm} /\langle n_{\pm} \rangle
\end{equation}
be the simple quotient $\eaffh$-modules of dimension $d-1$. Finally, denote the restriction 
of these simple modules to $\affh$ by 
\begin{equation}\label{eq:Lplusminusnonext}
\widehat{L}_{d}^{\pm} :=\widehat{M}_{d}^{\pm} /\langle n_{\pm} \rangle.  
\end{equation}
As explained above, these restrictions do not depend on $\lambda\in \mathbbm{k}^{\times}$. 

\subsection{Evaluation modules}
Let $M$ be a finite-dimensional $H_d$-module (over $\mathbbm{k}$). Recall that, 
for any $a\in \mathbbm{k}^{\times}$, there are two evaluation maps 
$\ev_a, \ev'_a\colon \eaffh\to H_d$ (see Definition~\ref{defn:evaluationmap}). 
\begin{defn}\label{defn:evaluationmodule}
For any $a\in \mathbbm{k}^{\times}$, the {\em evaluation modules} $M^{\mathrm{ev}_a}$ 
and $M^{\mathrm{ev}'_a}$ of $\eaffh$ are the pull-backs of $M$ through $\ev_a$ and $\ev'_a$, 
respectively. 
\end{defn}
The actions of $\eaffh$ on $M^{\ev_a}$ and $M^{\ev'_a}$ can be computed using the explicit formulas in Definition~\ref{defn:evaluationmap} and below. In this paper, we only consider the case when $M:=M_d$ is the simple 
$H_d$-module corresponding to the partition $(d-1,1)$. There are several ways to define $M_d$ explicitly and the definition we choose here is tailor-made for categorification. Take $M_d:=\mathrm{span}_{\mathbbm{k}}\{m_i\mid i\in I\}$, with the action of $H_d$ being given by  
\begin{equation}\label{eq:M}
b_i m_j = 
\begin{cases}
[2] m_i, & \text{if}\; j=i;\\
m_i, & \text{if}\; j=i\pm 1;\\
0, & \text{else},
\end{cases} 
\end{equation}
for $i,j\in I$. It is easy to show that $M_d$ is simple, but this is well-known so we leave it as an exercise to the reader. The action of the 
$T_i^{\pm 1}=b_i-q^{\pm 1}$ is also easy to give explicitly:
\begin{equation}
T_i^{\pm 1} m_j= 
\begin{cases}
q^{\mp 1} m_i, & \text{if}\; j=i;\\
m_i - q^{\pm 1} m_j, & \text{if}\; j=i\pm 1;\\
-q^{\pm 1} m_j, & \text{else}.
\end{cases}
\end{equation} 

Note that, as vector spaces, $M^{\ev_a}_d=M^{\ev'_a}_d=M_d$, and the action of 
$b_i\in \eaffh$, for $i\in I$, is the same as above because $\ev_a(b_i)=b_i$. 
A simple calculation now shows that  
\begin{equation}\label{eq:action-rho}
\ev_a(\rho) m_j= aT_1^{-1} \cdots T_{d-1}^{-1} m_j = 
\begin{cases}
a(-q)^{2-d} m_{j+1}, & \text{if}\; j=1,\ldots, d-2;\\
aq \sum_{k=1}^{d-1} (-q)^{1-k} m_{k},  & \text{if}\; j=d-1,
\end{cases}
\end{equation}
and 
\begin{equation}\label{eq:action-prime-rho}
\ev'_a(\rho) m_j= aT_1 \cdots T_{d-1} m_j = 
\begin{cases}
a(-q)^{d-2} m_{j+1}, & \text{if}\; j=1,\ldots, d-2;\\
aq^{-1} \sum_{k=1}^{d-1} (-q)^{k-1} m_{k},  & \text{if}\; j=d-1.
\end{cases}
\end{equation}
The actions of $b_0$ can then be computed using the equation $b_0=\rho^{-1} b_1 \rho$, 
but we omit the calculation because will not need the result.

Recall the simple quotients $\widehat{L}_{d, \lambda}^{\pm}$ of the Graham-Lehrer 
cell modules $\widehat{M}_{d, \lambda}^{\pm}$,  
defined in~\eqref{eq:Lplusminus}. 
\begin{thm}\label{thm:isosandpairing}
Let $a=\lambda (-q)^{d-2}$. There are two isomorphisms of $\eaffh$-modules 
\begin{eqnarray*}
\widehat{L}_{d, \lambda}^{+}&\cong & M^{\mathrm{ev}_a}_d; \\
 \widehat{L}_{d, \lambda}^{-}&\cong & M^{\mathrm{ev}'_{a^{-1}}}_d.
\end{eqnarray*}
Moreover, there is a perfect pairing of $\eaffh$-modules
\[
M_d^{\ev_a}\otimes M_d^{\ev'_{a^{-1}}}\to \mathbbm{k}. 
\] 
\end{thm}
\begin{proof}
To show the first part, it suffices to compute the action of $\rho$ on $\widehat{L}_{d, \lambda}^{+}$ and compare it to~\eqref{eq:action-rho}. 
Let $\overline{m}_k$ be the image of $m_k$ under 
the projection $\widehat{M}_{d, \lambda}^{+}\to \widehat{L}_{d, \lambda}^{+}$, 
for $k\in \widehat{I}$. Then 
$\left\{\overline{m}_1,\ldots, \overline{m}_{d-1}\right\}$ is a basis of 
$\widehat{L}_{d, \lambda}^{+}$, because $\overline{m}_0= -\sum_{k=1}^{d-1} (-q)^{k} \overline{m}_{d-k}$. This implies that in $\widehat{L}_{d, \lambda}^{+}$ 
we have 
\[
\rho\, \overline{m}_j=
\begin{cases}
\lambda \overline{m}_{j+1}, & \text{if}\, j=1,\ldots,d-2; \\
- \lambda \sum_{k=1}^{d-1} (-q)^{k} \overline{m}_{d-k},& \text{if}\, j=d-1.
\end{cases} 
\]
This is indeed the same as in \eqref{eq:action-rho} because $aq=\lambda (-q)^{d-2}q
=-\lambda (-q)^{d-1}$.  

Similarly, $\widehat{L}_{d, \lambda}^{-}\cong M^{\mathrm{ev}'_{a^{-1}}}_d$, 
as in $\widehat{L}_{d,\lambda}^{-}$ we have 
$\overline{m}_0= -\sum_{k=1}^{d-1} (-q)^{-k} \overline{m}_{d-k}$, so 
\[
\rho\, \overline{m}_j=
\begin{cases}
\lambda^{-1} \overline{m}_{j+1}, & \text{if}\, j=1,\ldots,d-2; \\
- \lambda^{-1} \sum_{k=1}^{d-1} (-q)^{-k} \overline{m}_{d-k},& \text{if}\, j=d-1,
\end{cases} 
\]
which is the same as in \eqref{eq:action-prime-rho} because $a^{-1}q^{-1}=
\lambda^{-1} (-q)^{2-d}q^{-1}=-\lambda^{-1}(-q)^{1-d}$.

For the second part, note that the two $\eaffh$-modules $\widehat{L}_{d, \lambda}^{+}$ and 
$\widehat{L}_{d, \lambda}^{-}$ are {\em dual} to each other, because we could 
also consider the radical defined by
\[
\mathrm{rad}'(\langle \cdot, \cdot\rangle)=
\left\{m'\in \widehat{M}_{z^{-1}, \lambda^{-1}}\mid \langle m,m'\rangle =0,\; \forall  
m\in \widehat{M}_{z, \lambda} \right\}, 
\]
which is an $\eaffh$-submodule of $\widehat{M}_{z^{-1}, \lambda^{-1}}$. 
As before, this radical is zero unless $z=(-q)^{\pm d}$. For these two values of $z$ and any 
value of $\lambda\in \mathbbm{k}^{\times}$, 
the two simple quotients of $\widehat{M}_{z^{-1},\lambda^{-1}}$ 
are isomorphic to $\widehat{L}_{d,\lambda}^{\mp}$ and 
the bilinear form descends to a perfect pairing 
\[
\widehat{L}_{d,\lambda}^{+} \otimes \widehat{L}_{d,\lambda}^{-}
\to \mathbbm{k}.
\]
By the first part, this is equivalent to a perfect pairing 
\[
M_d^{\ev_a}\otimes M_d^{\ev'_{a^{-1}}}\to \mathbbm{k},
\]
for $a=\lambda (-q)^{d-2}$. 
\end{proof}

\begin{rem} We claim no originality w.r.t. Theorem~\ref{thm:isosandpairing}, but we do not know of any reference in the literature where one can find it explicitly, which is why 
we have proved it here.  
\end{rem}


\section{Reminders on Soergel categories}\label{sec:soergel}

In this section we briefly recall the definition of the diagrammatic Soergel category 
of non-extended and extended affine type $A$ and 
finite type $A$, but before we do that we start with a brief section 
on graded categories and categories with shift. 

\subsection{Graded categories and categories with shift}\label{sec:graded-and-shifted-stuff}
All categories in this paper are assumed to be essentially small, meaning that they are equivalent to small categories, so set-theoretic questions play no role.  

We call a $\mathbb{C}$-linear category $\mathcal{A}$ {\em graded} if it is enriched over the category of $\mathbb{Z}$-graded vector spaces, and we call a $\mathbb{C}$-linear functor between such graded categories {\em degree-preserving} if it preserves the degrees of homogeneous morphisms.   

We say that a $\mathbb{C}$-linear category $\mathcal{A}$ has a {\em shift} (or, alternatively, 
that it is a {\em category with shift}) if there is a 
$\mathbb{C}$-linear automorphism $\langle 1\rangle$ of $\mathcal{A}$. If such a shift exists, 
we define $\langle r\rangle$ as the composite of $r$ copies of $\langle 1\rangle$ for any $r\in \mathbb{Z}_{\geq 0}$, 
and $-r$ copies of the inverse of $\langle 1\rangle$ for any $r\in \mathbb{Z}_{\leq 0}$. By definition, therefore,  
we have $\langle r+s\rangle =\langle r\rangle\circ\langle s \rangle$, for all $r,s\in \mathbb{Z}$, and $\langle 0\rangle=\mathrm{Id}_{\mathcal{A}}$.   

Given a graded category $\mathcal{A}$, let $\mathcal{A}^{\mathrm{sh}}$ be the 
associated $\mathbb{C}$-linear category with shift, whose objects are formal integer 
shifts of objects in $\mathcal{A}$ and whose hom-spaces are defined by 
\[
\mathcal{A}^{\mathrm{sh}}\left(X\langle r\rangle, Y\langle s\rangle\right):=\mathcal{A}\left(X,Y\right)_{s-r}
\] 
for every $X,Y\in \mathcal{A}$ and $r,s\in \mathbb{Z}$. 
Note that $\mathcal{A}^{\mathrm{sh}}$ is no longer a graded category.  
If the Hom-spaces of $\mathcal{A}$ are finite-dimensional in every degree, then the hom-spaces of 
$\mathcal{A}^{\mathrm{sh}}$ are finite-dimensional.  

Given two graded categories $\mathcal{A}$ and $\mathcal{B}$, any 
degree-preserving, $\mathbb{C}$-linear functor $F\colon \mathcal{A}\to \mathcal{B}$ induces a unique $\mathbb{C}$-linear functor $F\colon \mathcal{A}^{\mathrm{sh}}\to \mathcal{B}^{\mathrm{sh}}$, denoted by the same symbol, which commutes with the shifts.  

Conversely, given any $\mathbb{C}$-linear category $\mathcal{A}$ with shift, 
let $\mathcal{A}^{\mathrm{gr}}$ be the associated graded category with shift,  
whose objects are those of $\mathcal{A}$ and whose graded Hom-spaces are defined by
\[
\mathcal{A}^{\mathrm{gr}}\left(X,Y\right):=\bigoplus_{s\in \mathbb{Z}}\mathcal{A}\left(X,Y\langle s\rangle\right),
\]
for any $X,Y\in \mathcal{A}$. 

Given two $\mathbb{C}$-linear categories $\mathcal{A}$ and $\mathcal{B}$ with shifts, 
any $\mathbb{C}$-linear functor $F\colon \mathcal{A}\to \mathcal{B}$ commuting with the shifts induces a unique degree-preserving, $\mathbb{C}$-linear functor $F\colon \mathcal{A}^{\mathrm{gr}}\to \mathcal{B}^{\mathrm{gr}}$, denoted by the same symbol.  

Thus $(-)^{\mathrm{sh}}$ and $(-)^{\mathrm{gr}}$ define a pair of $2$-functors between the 
$2$-category of graded categories and the $2$-category of $\mathbb{C}$-linear 
categories with shift. It is not hard to show, see e.g.~\cite[Proposition 11.9]{e-m-t-w}, 
that $(-)^{\mathrm{sh}}$ is left adjoint to $(-)^{\mathrm{gr}}$, i.e., that there is a functorial isomorphism 
\[
\mathrm{Fun}\left(\mathcal{A}^{\mathrm{sh}}, \mathcal{B}\right)\cong \mathrm{Fun}\left(\mathcal{A}, 
\mathcal{B}^{\mathrm{gr}}\right)
\] 
for $\mathcal{A}$ a graded category and $\mathcal{B}$ a $\mathbb{C}$-linear category with shift. Here the first functor category is between categories with shift and the second between graded 
categories. 

For more details on graded categories and categories with shift, and also on additive closures and idempotent completions (a.k.a. Karoubi closures/envelopes), see e.g.~\cite[Sections 11.2.1-11.2.4]{e-m-t-w}. 


\subsection{Soergel calculus in finite and non-extended affine type A}\label{sec:soergeldiagrammatics-one}

The finite type $A$ diagrammatic Soergel calculus was introduced by Elias--Khovanov~\cite{elias-khovanov} 
and generalized to all Coxeter types by Elias--Williamson~\cite{elias-williamson-2}. 
The extended affine Soergel calculus was first defined in~\cite{mackaay-thiel} and studied more systematically 
in~\cite{elias2018}. We refer to the latter two papers for more details. 
For the specialists, we remark that we use the so-called {\em root span realization} of the Cartan datum of finite and affine type A below.  

Denote by $S=\{s_i\mid i\in \widehat{I}\}$ 
the set of simple reflections of $\widehat{\Sy}_d$. 
The \emph{diagrammatic Bott-Samelson category} of type $\widehat{A}_{d-1}$, denoted 
$\widehat{\eBS}_d$, is the $\mathbb{Z}$-graded, $\mathbb{C}$-linear, additive, 
monoidal category whose objects are formal finite direct sums of finite words in the alphabet 
$S$, and whose graded vector spaces of morphisms are defined below 
in terms of homogeneous generating diagrams and relations. In general, we can write 
the objects as vectors of words and morphisms as matrices of equivalence classes of diagrams.   

As usual, we will color the strands to facilitate the reading of the diagrams. 
These colors correspond to the elements of 
$\widehat{I}$, so henceforth we will also refer to those elements as colors. When there are too many different colors in a diagram, the colors are sometimes 
indicated by labels next to the strands. We 
say that two colors $i,j\in \widehat{I}$ are {\em adjacent} if $i \equiv j\pm 1\bmod d$ 
and that they are {\em distant} otherwise.  The generating diagrams are
\begin{equation*}
\xy (0,0)*{
\tikzdiagc[scale=1]{
\begin{scope}[yscale=-.5,xscale=.5,shift={(5,-2)}] 
  \draw[ultra thick,blue] (-1,0) -- (-1, 1)node[pos=0, tikzdot]{};
\end{scope}
\begin{scope}[yscale=.5,xscale=.5,shift={(8,2)}] 
   \draw[ultra thick,blue] (0,0)-- (0, 1); \draw[ultra thick,blue] (-1,-1) -- (0,0); \draw[ultra thick,blue] (1,-1) -- (0,0);
\end{scope}
\begin{scope}[scale=1,shift={(6,1)}] 
\draw[ultra thick,blue] (.5,-.5)  -- (-.5,.5);
\draw[ultra thick,mygreen] (-.5,-.5) -- (.5,.5);
\end{scope}
\begin{scope}[scale=.5,shift={(16,2)}] 
  \draw[ultra thick,myred] (0,-1) -- (0,0);\draw[ultra thick,myred] (0,0) -- (-1, 1);\draw[ultra thick,myred] (0,0) -- (1, 1);
  \draw[ultra thick,blue] (0,0)-- (0, 1); \draw[ultra thick,blue] (-1,-1) -- (0,0); \draw[ultra thick,blue] (1,-1) -- (0,0);
\end{scope}
\node at (0,0) {Degree};
\node at (2,0) {$1$};
\node at (4,0) {$-1$};
\node at (6,0) {$0$};
\node at (8,0) {$0$};
}}\endxy
\end{equation*}
and the diagrams obtained from these by a rotation of $180$ degrees (which have the 
same degrees). The colors of the 
4-valent vertices are assumed to be distant, whereas those of the 6-valent vertices are assumed 
to be adjacent. 

Diagrams can be stacked vertically (composition of morphisms) and juxtaposed horizontally (monoidal product of morphisms), while adding the degrees, and are subject to the relations 
below. We denote by $\id_{X}$ the identity morphism of $X$ and write $fg$ for the 
monoidal product of morphisms $f$ and $g$ (or, equivalently, horizontal composition when considering the monoidal category as a one-object bicategory). We also assume isotopy invariance and cyclicity, meaning that closed parts of the 
diagrams can be moved around freely in the plane as long as they do not cross any other strands 
and the boundary is fixed, and all diagrams can be bent and rotated and the bent and rotated 
versions of the relations also hold. 

\begin{itemize}
\item Relations involving one color:

\begingroup\allowdisplaybreaks
\begin{gather}\label{eq:relhatSfirst}
\xy (0,0)*{
\tikzdiagc[scale=.4,yscale=.7]{
  \draw[ultra thick,blue] (0,-1.75) -- (0,1.75);
  \draw[ultra thick,blue] (0,0) -- (1,0)node[pos=1, tikzdot]{};  
}}\endxy
=\ 
\xy (0,0)*{
\tikzdiagc[scale=.4,yscale=.7]{
  \draw[ultra thick,blue] (0,-1.75) -- (0,1.75);
}}\endxy
\\[1ex] \label{eq:relhatSsecond}
  \xy (0,.05)*{
\tikzdiagc[scale=.4,yscale=.7]{
  \draw[ultra thick,blue] (0,-1) -- (0,1);
  \draw[ultra thick,blue] (0,1) -- (-1,2);  \draw[ultra thick,blue] (0,1) -- (1,2);
  \draw[ultra thick,blue] (0,-1) -- (-1,-2);  \draw[ultra thick,blue] (0,-1) -- (1,-2);
}}\endxy
=
  \xy (0,.05)*{
\tikzdiagc[yscale=.4,xscale=.3]{
  \draw[ultra thick,blue] (-1,0) -- (1,0);
  \draw[ultra thick,blue] (-2,-1) -- (-1,0);  \draw[ultra thick,blue] (-2,1) -- (-1,0);
  \draw[ultra thick,blue] ( 2,-1) -- ( 1,0);  \draw[ultra thick,blue] ( 2,1) -- ( 1,0);
}}\endxy
\\[1ex] \label{eq:lollipop}
  \xy (0,0)*{
\tikzdiagc[scale=0.9]{
\draw[ultra thick,blue] (0,0) circle  (.3);
\draw[ultra thick,blue] (0,-.8) --  (0,-.3);
}}\endxy
  \ = 0
\\[1ex] \label{eq:relHatSlast}
\xy (0,0)*{
  \tikzdiagc[yscale=0.5,xscale=.5]{
    \draw[ultra thick,blue] (0,-.35) -- (0,.35)node[pos=0, tikzdot]{} node[pos=1, tikzdot]{};
  \draw[ultra thick,blue] (.6,-1)-- (.6, 1); 
}}\endxy
\ +\ 
\xy (0,0)*{
  \tikzdiagc[yscale=0.5,xscale=-.5]{
    \draw[ultra thick,blue] (0,-.35) -- (0,.35)node[pos=0, tikzdot]{} node[pos=1, tikzdot]{};
  \draw[ultra thick,blue] (.6,-1)-- (.6, 1); 
}}\endxy\
=
2\,\  
\xy (0,0)*{
  \tikzdiagc[yscale=0.5,xscale=-.5]{
\draw[ultra thick,blue] (0,-1) -- (0,-.4)node[pos=1, tikzdot]{};
\draw[ultra thick,blue] (0,.4) -- (0,1)node[pos=0, tikzdot]{};
}}\endxy
\end{gather}
\endgroup

\item Relations involving two distant colors:
\begingroup\allowdisplaybreaks  
\begin{gather}
\label{eq:Scat-Rtwo}
  \xy (0,0)*{
  \tikzdiagc[yscale=1.7,xscale=1.1]{
    \draw[ultra thick,blue] (0,0) ..controls (0,.25) and (.65,.25) .. (.65,.5) ..controls (.65,.75) and (0,.75) .. (0,1);
\begin{scope}[shift={(.65,0)}]
    \draw[ultra thick,mygreen] (0,0) ..controls (0,.25) and (-.65,.25) .. (-.65,.5) ..controls (-.65,.75) and (0,.75) .. (0,1);
\end{scope}
}}\endxy
= \ 
\xy (0,0)*{
  \tikzdiagc[yscale=1.7,xscale=-1.1]{
\draw[ultra thick, blue] (.65,0) -- (.65,1);
\draw[ultra thick,mygreen] (0,0) -- (0,1);
  }}\endxy
\\[1ex]\label{eq:Scat-dotslide}
\xy (0,0)*{
\tikzdiagc[scale=1]{
\draw[ultra thick,blue] (.3,-.3) -- (-.5,.5)node[pos=0, tikzdot]{};
\draw[ultra thick,mygreen] (-.5,-.5) -- (.5,.5);
}}\endxy
=
\xy (0,0)*{
\tikzdiagc[scale=1]{
\draw[ultra thick,blue] (-.5,.5) -- (-.2,.2)node[pos=1, tikzdot]{};
\draw[ultra thick,mygreen] (-.5,-.5) -- (.5,.5);
}}\endxy
\\[1ex]\label{eq:Scat-trivslide}
\xy (0,0)*{
  \tikzdiagc[yscale=0.5,xscale=.5]{
  \draw[ultra thick,blue] (0,0)-- (0, 1); \draw[ultra thick,blue] (-1,-1) -- (0,0); \draw[ultra thick,blue] (1,-1) -- (0,0);
\draw[ultra thick,mygreen] (-1,0) ..controls (-.25,.75) and (.25,.75) .. (1,0);
}}\endxy
=
\xy (0,0)*{
  \tikzdiagc[yscale=0.5,xscale=.5]{
  \draw[ultra thick,blue] (0,0)-- (0, 1); \draw[ultra thick,blue] (-1,-1) -- (0,0); \draw[ultra thick,blue] (1,-1) -- (0,0);
\draw[ultra thick,mygreen] (-1,0) ..controls (-.25,-.75) and (.25,-.75) .. (1,0);
}}\endxy
\end{gather}
\endgroup

\item Relations involving two adjacent colors:

\begingroup\allowdisplaybreaks  
\begin{gather}\label{eq:6vertexdot}
\xy (0,0)*{
  \tikzdiagc[yscale=0.5,xscale=.5]{
  \draw[ultra thick,myred] (0,-.75) -- (0,0)node[pos=0, tikzdot]{};\draw[ultra thick,myred] (0,0) -- (-1, 1);\draw[ultra thick,myred] (0,0) -- (1, 1);
  \draw[ultra thick,blue] (0,0)-- (0, 1); \draw[ultra thick,blue] (-1,-1) -- (0,0); \draw[ultra thick,blue] (1,-1) -- (0,0);
}}\endxy
\ =\ \ 
\xy (0,0)*{
  \tikzdiagc[yscale=0.5,xscale=.5]{
  \draw[ultra thick,myred] (-1,0) -- (-1, 1)node[pos=0, tikzdot]{};\draw[ultra thick,myred] (1,0) -- (1, 1)node[pos=0, tikzdot]{};
  \draw[ultra thick,blue] (0,0)-- (0, 1); \draw[ultra thick,blue] (-1,-1) -- (0,0); \draw[ultra thick,blue] (1,-1) -- (0,0);
}}\endxy
\ \ + \
\xy (0,0)*{
\tikzdiagc[yscale=0.6,xscale=.8]{
 \draw[ultra thick,blue] (1.5,0.1) -- (1.5, 0.4)node[pos=0, tikzdot]{};
 \draw[ultra thick,myred] (.9,.4) .. controls (1.2,-.45) and (1.8,-.45) .. (2.1,.4);
 \draw[ultra thick,blue] ( .9,-1.2) .. controls (1.2,-.35) and (1.8,-.35) .. (2.1,-1.2);
}}\endxy
\\[1ex]\label{eq:braidmoveB}
\xy (0,.05)*{
\tikzdiagc[scale=0.4,yscale=1]{
  \draw[ultra thick,myred] (-1,-2) -- (-1,2);
  \draw[ultra thick,myred] ( 1,-2) -- ( 1,2);
  \draw[ultra thick,blue] (0,-2)  --  (0,2); 
}}\endxy
=
\xy (0,.05)*{
\tikzdiagc[scale=0.4,yscale=1]{
  \draw[ultra thick,myred] (0,-1) -- (0,1);
  \draw[ultra thick,myred] (0,1) -- (-1,2);  \draw[ultra thick,myred] (0,1) -- (1,2);
  \draw[ultra thick,myred] (0,-1) -- (-1,-2);  \draw[ultra thick,myred] (0,-1) -- (1,-2);
  \draw[ultra thick,blue] (0,1)  --  (0,2);  \draw[ultra thick,blue] (0,-1)  --  (0,-2);
  \draw[ultra thick,blue] (0,1)  ..controls (-.95,.25) and (-.95,-.25) ..  (0,-1);
  \draw[ultra thick,blue] (0,1)  ..controls ( .95,.25) and ( .95,-.25) ..  (0,-1);
}}\endxy
-
\xy (0,.05)*{
\tikzdiagc[scale=0.4,yscale=1]{
  \draw[ultra thick,myred] (0,-.6) -- (0,.6);
  \draw[ultra thick,myred] (0,.6) .. controls (-.25,.6) and (-1,1).. (-1,2);
  \draw[ultra thick,myred] (0,.6) .. controls (.25,.6) and (1,1) .. (1,2);
  \draw[ultra thick,myred] (0,-.6) .. controls (-.25,-.6) and (-1,-1).. (-1,-2);
  \draw[ultra thick,myred] (0,-.6) .. controls (.25,-.6) and (1,-1) .. (1,-2);
  \draw[ultra thick,blue] (0,1.25)  --  (0,2)node[pos=0, tikzdot]{};  \draw[ultra thick,blue] (0,-1.25)  --  (0,-2)node[pos=0, tikzdot]{};
 }}\endxy
\\[1ex]
\label{eq:stroman}
\xy (0,.05)*{
\tikzdiagc[scale=0.4,yscale=1]{
  \draw[ultra thick,myred] (0,-1) -- (0,1);
  \draw[ultra thick,myred] (0,1) -- (-1,2);  \draw[ultra thick,myred] (0,1) -- (1,2);
  \draw[ultra thick,myred] (0,-1) -- (-1,-2);  \draw[ultra thick,myred] (0,-1) -- (1,-2);
  \draw[ultra thick,blue] (0,1)  --  (0,2);  \draw[ultra thick,blue] (0,-1)  --  (0,-2);
  \draw[ultra thick,blue] (0,1)  ..controls (-.95,.25) and (-.95,-.25) ..  (0,-1);
  \draw[ultra thick,blue] (0,1)  ..controls ( .95,.25) and ( .95,-.25) ..  (0,-1);
  \draw[ultra thick,blue] (-2,0)  --  (-.75,0);  \draw[ultra thick,blue] (2,0)  --  (.75,0);
}}\endxy
=
  \xy (0,.05)*{
\tikzdiagc[scale=0.4,yscale=1]{
  \draw[ultra thick,myred] (-1,0) -- (1,0);
  \draw[ultra thick,myred] (-2,-1) -- (-1,0);  \draw[ultra thick,myred] (-2,1) -- (-1,0);
  \draw[ultra thick,myred] ( 2,-1) -- ( 1,0);  \draw[ultra thick,myred] ( 2,1) -- ( 1,0);
  \draw[ultra thick,blue] (-1,0)  ..controls (-.5,-.8) and (.5,-.8) ..  (1,0);
  \draw[ultra thick,blue] (-1,0)  ..controls (-.5, .8) and (.5, .8) ..  (1,0);
  \draw[ultra thick,blue] (0,.65)  --  (0,1.8);  \draw[ultra thick,blue] (0,-.65)  --  (0,-1.8);
  \draw[ultra thick,blue] (-2,0)  --  (-1,0);  \draw[ultra thick,blue] (2,0)  --  (1,0);
}}\endxy
\\[1ex]\label{eq:forcingdumbel-i-iminus}
\xy (0,0)*{
  \tikzdiagc[yscale=0.5,xscale=.5]{
    \draw[ultra thick,myred] (0,-.35) -- (0,.35)node[pos=0, tikzdot]{} node[pos=1, tikzdot]{};
  \draw[ultra thick,blue] (.6,-1)-- (.6, 1); 
}}\endxy
-
\xy (0,0)*{
  \tikzdiagc[yscale=0.5,xscale=-.5]{
    \draw[ultra thick,myred] (0,-.35) -- (0,.35)node[pos=0, tikzdot]{} node[pos=1, tikzdot]{};
  \draw[ultra thick,blue] (.6,-1)-- (.6, 1); 
}}\endxy
\ =\frac{1}{2}
\biggl(\ 
\xy (0,0)*{
  \tikzdiagc[yscale=0.5,xscale=-.5]{
    \draw[ultra thick,blue] (0,-.35) -- (0,.35)node[pos=0, tikzdot]{} node[pos=1, tikzdot]{};
  \draw[ultra thick,blue] (.6,-1)-- (.6, 1); 
}}\endxy
-
\xy (0,0)*{
  \tikzdiagc[yscale=0.5,xscale=.5]{
    \draw[ultra thick,blue] (0,-.35) -- (0,.35)node[pos=0, tikzdot]{} node[pos=1, tikzdot]{};
  \draw[ultra thick,blue] (.6,-1)-- (.6, 1); 
}}\endxy\ 
\biggr)
\end{gather}
\endgroup


\item Relation involving three distant colors:
\begin{equation}
\xy (0,0)*{
  \tikzdiagc[yscale=0.5,xscale=.5]{
 \draw[ultra thick,blue] (-1,-1) -- (1,1);
 \draw[ultra thick,mygreen] (1,-1) -- (-1,1);
 \draw[ultra thick,orange] (-1,0) ..controls (-.25,.75) and (.25,.75) .. (1,0);
}}\endxy
=
\xy (0,0)*{
  \tikzdiagc[yscale=0.5,xscale=.5]{
 \draw[ultra thick,blue] (-1,-1) -- (1,1);
 \draw[ultra thick,mygreen] (1,-1) -- (-1,1);
\draw[ultra thick,orange] (-1,0) ..controls (-.25,-.75) and (.25,-.75) .. (1,0);
}}\endxy
\end{equation}

\item Relation involving distant dumbbells:
\begin{equation}\label{eq:distantdumbbells}
\xy (0,0)*{
  \tikzdiagc[yscale=0.5,xscale=.5]{
    \draw[ultra thick,mygreen] (0,-.35) -- (0,.35)node[pos=0, tikzdot]{} node[pos=1, tikzdot]{};
  \draw[ultra thick,blue] (.6,-1)-- (.6, 1); 
}}\endxy
-
\xy (0,0)*{
  \tikzdiagc[yscale=0.5,xscale=-.5]{
    \draw[ultra thick,mygreen] (0,-.35) -- (0,.35)node[pos=0, tikzdot]{} node[pos=1, tikzdot]{};
  \draw[ultra thick,blue] (.6,-1)-- (.6, 1); 
}}\endxy
\ = 0
\end{equation}

\item Relation involving two adjacent colors and one distant from the other two:
\begin{equation}\label{eq:sixv-dist}
\xy (0,0)*{
  \tikzdiagc[yscale=0.5,xscale=.5]{
  \draw[ultra thick,myred] (0,-1) -- (0,0);\draw[ultra thick,myred] (0,0) -- (-1, 1);\draw[ultra thick,myred] (0,0) -- (1, 1);
  \draw[ultra thick,blue] (0,0)-- (0, 1); \draw[ultra thick,blue] (-1,-1) -- (0,0); \draw[ultra thick,blue] (1,-1) -- (0,0);
\draw[ultra thick,mygreen] (-1,0) ..controls (-.25,.75) and (.25,.75) .. (1,0);  
}}\endxy
\ =\ \ 
\xy (0,0)*{
  \tikzdiagc[yscale=0.5,xscale=.5]{
  \draw[ultra thick,myred] (0,-1) -- (0,0);\draw[ultra thick,myred] (0,0) -- (-1, 1);\draw[ultra thick,myred] (0,0) -- (1, 1);
  \draw[ultra thick,blue] (0,0)-- (0, 1); \draw[ultra thick,blue] (-1,-1) -- (0,0); \draw[ultra thick,blue] (1,-1) -- (0,0);
\draw[ultra thick,mygreen] (-1,0) ..controls (-.25,-.75) and (.25,-.75) .. (1,0);  
}}\endxy
\end{equation}

\item Relation involving three colors such that one of them is adjacent to the other two:
\begin{equation}\label{eq:relhatSlast}
\xy (0,.05)*{
\tikzdiagc[scale=0.5,yscale=1]{
  \draw[ultra thick,myred] (0,-1) -- (0,1);
  \draw[ultra thick,myred] (0,1) -- (-1,2);
  \draw[ultra thick,myred] (0,1) -- (2,2);
  \draw[ultra thick,myred] (0,-1) -- (-2,-2);  \draw[ultra thick,myred] (0,-1) -- (1,-2);
  \draw[ultra thick,blue] (0,1)  --  (0,2);  \draw[ultra thick,blue] (0,-1)  --  (0,-2);
  \draw[ultra thick,blue] (0,1) -- (1,0); \draw[ultra thick,blue] (0,1) -- (-1,0);
  \draw[ultra thick,blue] (0,-1) -- (1,0); \draw[ultra thick,blue] (0,-1) -- (-1,0);
  \draw[ultra thick,blue] (-2,0) -- (-1,0);\draw[ultra thick,blue] (2,0) -- (1,0);
\draw[ultra thick,mygreen] (-1,0) -- (1,0);
\draw[ultra thick,mygreen] (-1,0) -- (-1,-2);\draw[ultra thick,mygreen] (-1,0) -- (-2,2);   
\draw[ultra thick,mygreen] (1,0) -- (2,-2);\draw[ultra thick,mygreen] (1,0) -- (1,2);   
}}\endxy
=
\xy (0,.05)*{
\tikzdiagc[scale=0.5,yscale=1]{
  \draw[ultra thick,mygreen] (0,-1) -- (0,1);
  \draw[ultra thick,mygreen] (0,1) -- (1,2);
  \draw[ultra thick,mygreen] (0,1) -- (-2,2);
  \draw[ultra thick,mygreen] (0,-1) -- (2,-2);
  \draw[ultra thick,mygreen] (0,-1) -- (-1,-2);
  \draw[ultra thick,blue] (0,1)  --  (0,2);
  \draw[ultra thick,blue] (0,-1)  --  (0,-2);
  \draw[ultra thick,blue] (0,1) -- (1,0);
  \draw[ultra thick,blue] (0,1) -- (-1,0);
  \draw[ultra thick,blue] (0,-1) -- (1,0);
  \draw[ultra thick,blue] (0,-1) -- (-1,0);
  \draw[ultra thick,blue] (-2,0) -- (-1,0);
  \draw[ultra thick,blue] (2,0) -- (1,0);
\draw[ultra thick,myred] (1,0) -- (-1,0);
\draw[ultra thick,myred] (1,0) -- (1,-2);
\draw[ultra thick,myred] (1,0) -- (2,2);   
\draw[ultra thick,myred] (-1,0) -- (-2,-2);
\draw[ultra thick,myred] (-1,0) -- (-1,2);   
}}\endxy
\end{equation}

\end{itemize}  

Note that the empty word is the identity object in $\widehat{\eBS}_d$ and its endomorphisms are 
the closed diagrams, which by the relations above are equal to polynomials 
in the colored dumbbells 
\begin{equation*}
\xy (0,0)*{
\tikzdiagc[scale=1]{
\begin{scope}[yscale=-.5,xscale=.5] 
  \draw[ultra thick,blue] (-1,-.4) -- (-1, .4)node[pos=0, tikzdot]{} node[pos=1, tikzdot]{};
\end{scope}
}}\endxy
\end{equation*}
As each dumbbell has degree $2$, the degree of any polynomial in these dumbbells,  
as a morphism in $\widehat{\eS_{BS}}$, is twice its polynomial degree. From now on, 
we denote this polynomial algebra by $R$.  

Note further that, by relations \eqref{eq:relHatSlast}, \eqref{eq:forcingdumbel-i-iminus} 
and \eqref{eq:distantdumbbells}, the morphism 
\begin{equation}\label{eq:dumbbellsum}
\sum_{i=0}^{d-1} \,\xy (0,0)*{
\tikzdiagc[scale=1]{
\begin{scope}[yscale=-.5,xscale=.5] 
  \draw[ultra thick,blue] (-1,-.4) -- (-1, .4)node[pos=0, tikzdot]{} node[pos=1, tikzdot]{} 
node at (-0.5,0) {\small $i$} ;
\end{scope}
}}\endxy
\end{equation}
is central, in the sense that it can be slid through all diagrams 
(i.e. it commutes horizontally with all morphisms). Note that this morphism is equal to 
$\fbox{y}$ (up to sign, depending on conventions) in~\cite{mackaay-thiel}, because it is  
equal to the sum of all simple roots.

Let $\widehat{\eBS}_d^{\mathrm{sh}}$ be the category with shift associated to 
$\widehat{\eBS}_d$, see Section~\ref{sec:graded-and-shifted-stuff}.

\begin{defn}
The {\em diagrammatic Soergel category} $\Saff_d$ is the 
idempotent completion of the diagrammatic Bott-Samelson category with shift 
$\widehat{\eBS}_d^{\mathrm{sh}}$. 
\end{defn}

\begin{rem}\label{rem:shifts-etc1} In the following sections, we sometimes state and prove diagrammatic equations in $\widehat{\eBS}_d$, in which case there 
are no shifts for the source and target objects, instead of $\Saff_d$, in which case 
the source and target objects are carefully shifted. This is just to simplify notation and makes no essential difference in our case. As long as the equations in $\widehat{\eBS}_d$ are between homogeneous diagrams of the same degree, they give rise to an equality between morphisms in $\Saff_d$, which is the key point. 
\end{rem}

The diagrammatic Bott-Samelson category $\widehat{\eBS}_d$ is equivalent to the algebraic 
category of Bott-Samelson bimodules and bimodule maps 
and the diagrammatic Soergel category $\Saff_d$ is equivalent to the algebraic category 
of Soergel bimodules and degree-preserving bimodule maps, 
see~\cite[Theorem 6.28]{elias-williamson-2}. 
For convenience, we will therefore denote the objects of $\widehat{\eBS}_d$ by 
$\rB_{\underline{w}}=\rB_{s_{i_1}}\cdots \rB_{s_{i_{\ell}}}$, where $\underline{w}=s_{i_1}\cdots s_{i_{\ell}}$ is a finite word in the alphabet $S$. In particular, the monoidal product is given by $\rB_{\underline{u}}\rB_{\underline{v}} =
\rB_{\underline{uv}}$, where $\underline{uv}$ is the 
concatenation of the words $\underline{u}$ and $\underline{v}$. 

Let us also recall the so-called {\em Categorification Theorem}, due to Soergel in finite type $A$, 
to H\"arterich~\cite{harterich} in affine type $A$ and to Elias--Williamson~\cite{elias-williamson-1, elias-williamson-2} in general Coxeter type. 

\begin{thm}\label{thm:categorification}
For any $w\in \Saff_d$ and rex $\underline{w}=s_{i_1}\cdots s_{i_\ell}$ of $w$, 
there is an indecomposable object $\rB_w\in \Saff_d$, independent of the choice of rex, such that
\[
\rB_{\underline{w}}\cong \rB_w \oplus \bigoplus_{u \prec w} \rB_u^{\oplus h_{w,u}},
\] 
where $\prec$ is the Bruhat order in $\Saff_d$ and $h_{w,u}\in \mathbb{N}[q,q^{-1}]$ 
is the graded multiplicity of $\rB_u$ in the decomposition of $\rB_{\underline{w}}$. 

Moreover, the $\mathbb{Z}[q,q^{-1}]$-linear map 
\begin{gather*}
\affh^{\mathbb{Z}[q,q^{-1}]} \to [\widehat{\eS}_d]_{\oplus}\\
b_w\mapsto \rB_w, \quad w \in  \Saff_d
\end{gather*}
is an isomorphism of algebras, where 
$\affh^{\mathbb{Z}[q,q^{-1}]}$ is the integral form of $\affh$.
\end{thm}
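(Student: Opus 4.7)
\medskip

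The plan is to follow the by now classical strategy of Soergel, H\"arterich and Elias--Williamson. The key point is to construct the indecomposables $\rB_w$ by induction on the Coxeter length $\ell(w)$, and then to produce a well-defined algebra map $\affh^{\mathbb{Z}[q,q^{-1}]}\to [\widehat{\eS}_d]_{\oplus}$ sending $b_i\mapsto \rB_{s_i}$, which must be shown to be an isomorphism.

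First I would check that the assignment $b_i\mapsto \rB_{s_i}$ extends to an algebra homomorphism $\varphi\colon \affh^{\mathbb{Z}[q,q^{-1}]}\to [\widehat{\eS}_d]_{\oplus}$. This amounts to verifying in the split Grothendieck group of $\widehat{\eS}_d$ the categorified analogues of the quadratic, commutation and braid relations in~\eqref{eq:fdHeckebrels}. The quadratic relation $\rB_i^2\cong \rB_i\langle 1\rangle\oplus \rB_i\langle -1\rangle$ is visible from the diagrammatic generators (the dot and trivalent vertex give an explicit pair of idempotents summing to the identity), and the distant-commutation relation is immediate from~\eqref{eq:Scat-Rtwo}. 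For the braid relation one uses the standard calculation producing a direct summand $\rB_{s_is_{i+1}s_i}$ inside both $\rB_i\rB_{i+1}\rB_i$ and $\rB_{i+1}\rB_i\rB_{i+1}$ with a complementary summand $\rB_i$ (respectively $\rB_{i+1}$), which yields the categorified $b_ib_{i+1}b_i+b_{i+1}=b_{i+1}b_ib_{i+1}+b_i$.

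Next I would construct the indecomposables $\rB_w$ by induction on $\ell(w)$. The base case is $\rB_e=R$. For the inductive step, pick a rex $\underline{w}=\underline{u}s_i$ with $\ell(w)=\ell(u)+1$; by induction $\rB_{\underline u}\cong \rB_u\oplus \bigoplus_{v\prec u}\rB_v^{\oplus h_{u,v}}$. Multiplying on the right by $\rB_{s_i}$ and using the Hecke relations (already categorified in the previous paragraph) one sees that $\rB_{\underline u}\rB_{s_i}$ decomposes into summands indexed by elements $\preceq w$, and that exactly one new indecomposable summand $\rB_w$ appears, namely the unique indecomposable direct summand of $\rB_{\underline w}$ which is not isomorphic to a shift of some $\rB_v$ with $v\prec w$. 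Independence of the rex follows from the existence, proved diagrammatically via the braid relations and~\eqref{eq:relhatSlast}, of canonical isomorphisms between $\rB_{\underline w}$ for different rexes of $w$, combined with the Krull--Schmidt property of $\widehat{\eS}_d$.

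The remaining step is to upgrade $\varphi$ to an isomorphism. Surjectivity is clear since the $\rB_{s_i}$ generate $[\widehat{\eS}_d]_\oplus$ as an algebra by construction, and the images of the regular basis elements $b_{\underline w}$ coincide with the classes $[\rB_{\underline w}]$. For injectivity one invokes the Soergel hom formula/double leaves basis of Elias--Williamson~\cite{elias-williamson-2}, which computes $\mathrm{grk}\,\mathrm{Hom}(\rB_{\underline u},\rB_{\underline v})$ as the standard pairing of the corresponding Hecke algebra elements; in particular the $[\rB_w]$ are $\mathbb{Z}[q,q^{-1}]$-linearly independent in $[\widehat{\eS}_d]_{\oplus}$. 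This both yields the isomorphism and, since the $\rB_w$ must then be the preimages of an honest basis, identifies them (up to Krull--Schmidt) with the indecomposables constructed above.

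The hard part is the double leaves basis theorem, which requires a careful diagrammatic argument (light leaves and path dominance in the Bruhat order); in affine type it was first established by H\"arterich and later subsumed into the uniform treatment of~\cite{elias-williamson-2}. Once that basis is in hand, positivity of the multiplicities $h_{w,u}\in \mathbb{N}[q,q^{-1}]$ is automatic from the fact that they count summands in an honest direct sum decomposition.
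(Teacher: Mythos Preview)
The paper does not give its own proof of this theorem: it is stated as a recalled result, attributed to Soergel (finite type~$A$), H\"arterich (affine type~$A$) and Elias--Williamson (general Coxeter type), and used as background for the rest of the paper. So there is no ``paper's proof'' to compare against; your outline is essentially a sketch of the standard argument from those references, and as such it is broadly correct and appropriately identifies the double leaves basis theorem as the hard technical input.

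One point in your inductive step deserves more care. You assert that in $\rB_{\underline u}\rB_{s_i}$ ``exactly one new indecomposable summand $\rB_w$ appears'', but you have not explained why there is \emph{exactly one} indecomposable summand not isomorphic to a shift of some $\rB_v$ with $v\prec w$. This is not a formal consequence of the categorified Hecke relations alone; in the standard treatment it is established either via the character map to the Hecke algebra (showing that the class of $\rB_{\underline w}$ equals $b_w$ plus lower terms, so Krull--Schmidt forces a unique ``top'' summand) or, equivalently, via the support/standard-filtration characterization of Soergel bimodules. Similarly, your appeal to ``canonical isomorphisms between $\rB_{\underline w}$ for different rexes'' is slightly misleading: different rexes give Bott--Samelson objects that are not isomorphic in general, only isomorphic after passing to the Grothendieck group (or up to lower summands), and it is Krull--Schmidt together with the character argument that yields rex-independence of $\rB_w$. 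These are small gaps in an otherwise sound outline.
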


Let $\Saff_d^{\mathrm{gr}}$ be the graded monoidal category associated to $\Saff_d$, see 
Section~\ref{sec:graded-and-shifted-stuff}. For every $u,v\in \Saff$, the graded Hom-space 
\[
\Saff_d^{\mathrm{gr}}\left(\rB_u,\rB_v\right)=\bigoplus_{t\in \mathbb{Z}}\Saff_d\left(\rB_u,\rB_v
\langle t\rangle \right)
\] 
is a free left (or right) graded $R$-module of finite graded rank, given 
by {\em Soergel's Hom-formula}:  
\begin{equation}\label{eq:Soergelhom}
\mathrm{grk}_{R}\left(\Saff_d^{\mathrm{gr}}\left(\rB_u,\rB_v\right)\right)=(b_u,b_v),
\end{equation}
where $(-,-)$ is the well-known sesquilinear form on $\affh$, see 
e.g.~\cite[Section 2.4 and Theorem 3.15]{elias-williamson-2}.

\begin{defn}
The diagrammatic Bott-Samelson category and the diagrammatic Soergel category of finite type $A_{d-1}$, denoted $\eBS_d$ and $\eS_d$ respectively, are defined as $\widehat{\eBS}_d$ and 
$\Saff_d$ but only using the colors $I$. 
\end{defn}

Note that $\eBS_d$ and $\eS_d$ are monoidal subcategories of $\widehat{\eBS}_d$ and 
$\Saff_d$, respectively, but that the natural embeddings are not full because 
e.g. the $0$-colored dumbbell is not a morphism in $\eBS_d$ and $\eS_d$.    


\subsection{Soergel calculus in extended affine type $A$}\label{sec:soergeldiagrammatics-two}

In this subsection we briefly sketch how to enhance $\widehat{\eBS}_d$ and $\Saff_d$ 
to get the extended diagrammatic Soergel category of type $\widehat{A}_{d-1}$, denoted 
$\BSext_d$ and $\Sext_d$, which were introduced in~\cite{mackaay-thiel} and further studied 
in~\cite{elias2018}. We refer to those two papers for more details. 

The objects of $\BSext_d$ are formal direct sums of words in the 
alphabet $S\cup\{\rho,\rho^{-1}\}$. Because of 
the link with algebraic bimodules, we write $\rB_{\rho}^n$ for $\rho^n$, for any 
$n\in \mathbb{Z}$.

There are also new generating diagrams, all of degree zero, involving oriented strands. The generators involving only oriented strands are 
\begin{equation}\label{eq:orientedcoloredgens1}
\xy (0,0)*{
  \tikzdiagc[yscale=0.9,baseline={([yshift=-.8ex]current bounding box.center)}]{
  \draw[ultra thick,black,-to] (1.5,-.5) -- (1.5, .5);
  }
}\endxy
\mspace{60mu}
\xy (0,0)*{
  \tikzdiagc[yscale=0.9,baseline={([yshift=-.8ex]current bounding box.center)}]{
  \draw[ultra thick,black,to-] (1.5,-.5) -- (1.5, .5);
  }
}\endxy
\mspace{60mu}
\xy (0,.55)*{
\tikzdiagc[yscale=0.9]{
\draw[ultra thick,black,to-] (1,.4) .. controls (1.2,-.4) and (1.8,-.4) .. (2,.4);
}}\endxy
\mspace{60mu}
  \xy (0,.55)*{
\tikzdiagc[yscale=0.9]{
\draw[ultra thick,black,-to] (1,.4) .. controls (1.2,-.4) and (1.8,-.4) .. (2,.4);
    }}\endxy
\mspace{60mu}
  \xy (0,0)*{
\tikzdiagc[yscale=-0.9]{
\draw[ultra thick,black,to-] (1,.4) .. controls (1.2,-.4) and (1.8,-.4) .. (2,.4);
            }}\endxy
\mspace{60mu}
  \xy (0,0)*{
\tikzdiagc[yscale=-0.9]{
\draw[ultra thick,black,-to] (1,.4) .. controls (1.2,-.4) and (1.8,-.4) .. (2,.4);
            }}\endxy
\end{equation}  
and the generating diagrams involving oriented strands and adjacent colored strands are 
\begin{equation}\label{eq:orientedcoloredgens2}
  \xy (0,-2.25)*{
\tikzdiagc[scale=1]{
\draw[ultra thick,myred] (.5,-.5)node[below] {\tiny $i-1$} -- (0,0);
\draw[ultra thick,blue] (-.5,.5) node[above] {\tiny $i$} -- (0,0);
\draw[ultra thick,black,-to] (-.5,-.5) -- (.5,.5);
  }}\endxy
\mspace{60mu}  
  \xy (0,-2.25)*{
\tikzdiagc[scale=1,xscale=-1]{
\draw[ultra thick,blue] (.5,-.5)node[below] {\tiny $i$} -- (0,0);
\draw[ultra thick,myred] (-.5,.5) node[above] {\tiny $i-1$}-- (0,0);
\draw[ultra thick,black,-to] (-.5,-.5) -- (.5,.5);
  }}\endxy
\mspace{60mu}
\xy (0,-2.25)*{
\tikzdiagc[scale=1]{
\draw[ultra thick,blue] (.5,-.5)node[below] {\tiny $i$} -- (0,0);
\draw[ultra thick,myred] (-.5,.5) node[above] {\tiny $i-1$} -- (0,0);
\draw[ultra thick,black,-to] (.5,.5) -- (-.5,-.5);
  }}\endxy
\mspace{60mu}
  \xy (0,-2.25)*{
\tikzdiagc[scale=1,yscale=-1]{
\draw[ultra thick,blue] (.5,-.5) node[above] {\tiny $i$} -- (0,0);
\draw[ultra thick,myred] (-.5,.5)node[below] {\tiny $i-1$} -- (0,0);
\draw[ultra thick,black,-to] (-.5,-.5) -- (.5,.5);
  }}\endxy
\end{equation}

The new morphisms satisfy the following relations, where we again assume isotopy invariance and cyclicity.
\begin{itemize}
\item Relations involving only oriented strands:
\begingroup\allowdisplaybreaks
  \begin{gather}\label{eq:orloop}
  \xy (0,0)*{
\tikzdiagc[yscale=-0.9]{
\draw[ultra thick,black] (0,0) circle  (.65);\draw [ultra thick,black,-to] (.65,0) --(.65,0);
  }}\endxy
  \ = 1 =\ 
    \xy (0,0)*{
\tikzdiagc[yscale=-0.9]{
\draw[ultra thick,black] (0,0) circle  (.65);\draw [ultra thick,black,to-] (-.65,0) --(-.65,0);
  }}\endxy 
\\ \label{eq:orinv}
\xy (0,0)*{
\tikzdiagc[yscale=0.8]{
\draw[ultra thick,black,-to] (.5,-.75) -- (.5,.75);
\draw[ultra thick,black,to-] (-.5,-.75) -- (-.5,.75);
}}\endxy\ 
=
\xy (0,0)*{
  \tikzdiagc[yscale=0.8]{
\draw[ultra thick,black,-to] (1,.75) .. controls (1.2,-.05) and (1.8,-.05) .. (2,.75);
\draw[ultra thick,black,to-] (1,-.75) .. controls (1.2,.05) and (1.8,.05) .. (2,.-.75);
            }}\endxy 
          \mspace{80mu}
\xy (0,0)*{
\tikzdiagc[yscale=-0.8]{
\draw[ultra thick,black,-to] (.5,-.75) -- (.5,.75);
\draw[ultra thick,black,to-] (-.5,-.75) -- (-.5,.75);
}}\endxy\ 
=
\xy (0,0)*{
  \tikzdiagc[yscale=-0.8]{
\draw[ultra thick,black,-to] (1,.75) .. controls (1.2,-.05) and (1.8,-.05) .. (2,.75);
\draw[ultra thick,black,to-] (1,-.75) .. controls (1.2,.05) and (1.8,.05) .. (2,.-.75);
}}\endxy
\end{gather}
\endgroup

\item Relation involving oriented strands and distant colored strands:
\begin{equation}\label{eq:orthru4vertex}
\xy (0,0)*{
  \tikzdiagc[yscale=0.5,xscale=.5]{
    \draw[ultra thick,blue] (-1,-1)node[below]{\tiny $i$} -- (.45,.45);
    \draw[ultra thick,myred] (.45,.45) -- (1,1)node[above]{\tiny $i-1$};
    \draw[ultra thick,mygreen] (1,-1)node[below]{\tiny $j$} -- (-.45,.45);
    \draw[ultra thick,olive] (-.45,.45) -- (-1,1)node[above]{\tiny $j-1$};
 \draw[ultra thick,black,to-] (-1,0) ..controls (-.25,.75) and (.25,.75) .. (1,0);
}}\endxy
=
\xy (0,0)*{
  \tikzdiagc[yscale=0.5,xscale=.5]{
    \draw[ultra thick,blue] (-1,-1)node[below]{\tiny $i$} -- (-.45,-.45);
    \draw[ultra thick,myred] (-.45,-.45) -- (1,1)node[above]{\tiny $i-1$};
    \draw[ultra thick,mygreen] (1,-1)node[below]{\tiny $j$} -- (.45,-.45);
    \draw[ultra thick,olive] (.45,-.45) -- (-1,1)node[above]{\tiny $j-1$};
 \draw[ultra thick,black,to-] (-1,0) ..controls (-.25,-.75) and (.25,-.75) .. (1,0);
}}\endxy
\end{equation}

 \item Relations involving oriented strands and two adjacent colored strands:
\begingroup\allowdisplaybreaks
\begin{gather}\label{eq;orReidII}
\xy (0,0)*{
  \tikzdiagc[yscale=2.1,xscale=1.1]{
\draw[ultra thick, blue] (1,0)node[below]{\tiny $i$} .. controls (1,.15) and  (.7,.24) .. (.5,.29);
\draw[ultra thick,myred] (.5,.29) .. controls (-.1,.4) and (-.1,.6) .. (.5,.71);
\draw[ultra thick, blue] (1,1)node[above]{\tiny $i$} .. controls (1,.85) and  (.7,.74) .. (.5,.71) ;
\node[myred] at (-.35,.5) {\tiny $i-1$};
\draw[ultra thick,black,to-] (0,0) ..controls (0,.35) and (1,.25) .. (1,.5) ..controls (1,.75) and (0,.65) .. (0,1);
  }}\endxy
= \ 
\xy (0,0)*{
  \tikzdiagc[yscale=2.1,xscale=1.1]{
\draw[ultra thick, blue] (1,0)node[below]{\tiny $i$} -- (1,1)node[above]{\phantom{\tiny $i$}};
    \draw[ultra thick,black,to-] (0,0) -- (0,1);
  }}\endxy
\mspace{80mu}
\xy (0,0)*{
  \tikzdiagc[yscale=2.1,xscale=-1.1]{
\draw[ultra thick,myred] (1,0)node[below]{\tiny $i-1$} .. controls (1,.15) and  (.7,.24) .. (.5,.29);
\draw[ultra thick,blue] (.5,.29) .. controls (-.1,.4) and (-.1,.6) .. (.5,.71);
\draw[ultra thick,myred] (1,1)node[above]{\tiny $i-1$} .. controls (1,.85) and  (.7,.74) .. (.5,.71) ;
\node[blue] at (-.15,.5) {\tiny $i$};
\draw[ultra thick,black,to-] (0,0) ..controls (0,.35) and (1,.25) .. (1,.5) ..controls (1,.75) and (0,.65) .. (0,1);
  }}\endxy
= \ 
\xy (0,0)*{
  \tikzdiagc[yscale=2.1,xscale=-1.1]{
\draw[ultra thick,myred] (1,0)node[below]{\tiny $i-1$} -- (1,1)node[above]{\phantom{\tiny $i-1$}};
    \draw[ultra thick,black,to-] (0,0) -- (0,1);
  }}\endxy
\\[1ex] \label{eq:dotrhuor}
\xy (0,1.2)*{
\tikzdiagc[scale=1]{
\draw[ultra thick,blue] (.3,-.3)node[below]{\tiny $i$} -- (0,0)node[pos=0, tikzdot]{};
\draw[ultra thick,myred] (-.5,.5)node[above]{\tiny $i-1$} -- (0,0);
\draw[ultra thick,black,to-] (-.5,-.5) -- (.5,.5);
}}\endxy
=
\xy (0,2.5)*{
\tikzdiagc[scale=1]{
\draw[ultra thick,myred] (-.5,.5)node[above]{\tiny $i-1$} -- (-.2,.2)node[pos=1, tikzdot]{};
\draw[ultra thick,black,to-] (-.5,-.5) -- (.5,.5);
}}\endxy
\mspace{80mu}
\xy (0,-.85)*{
\tikzdiagc[xscale=-1,yscale=-1]{
\draw[ultra thick,myred] (.3,-.3)node[above]{\tiny $i-1$} -- (0,0)node[pos=0, tikzdot]{};
\draw[ultra thick,blue] (-.5,.5)node[below]{\tiny $i$} -- (0,0);
\draw[ultra thick,black,-to] (-.5,-.5) -- (.5,.5);
}}\endxy
=
\xy (0,-2.3)*{
\tikzdiagc[xscale=-1,yscale=-1]{
\draw[ultra thick,blue] (-.5,.5)node[below]{\tiny $i$} -- (-.2,.2)node[pos=1, tikzdot]{};
\draw[ultra thick,black,-to] (-.5,-.5) -- (.5,.5);
}}\endxy
\\[1ex] \label{eq:orpitchfork}
\xy (0,0)*{
  \tikzdiagc[yscale=-0.5,xscale=.5]{
    \draw[ultra thick,myred] (0,0)-- (0,.5);
    \draw[ultra thick,myred] (-1,-1)node[above]{\tiny $i-1$} -- (0,0);
    \draw[ultra thick,myred] (1,-1) -- (0,0);
\draw[ultra thick,blue] (0,.5) -- (0,1)node[below]{\tiny $i$};
  \draw[ultra thick,black,to-] (-1,0) ..controls (-.25,.7) and (.25,.7) .. (1,0);
}}\endxy
=
\xy (0,0)*{
  \tikzdiagc[yscale=-0.5,xscale=.5]{
    \draw[ultra thick,myred] (-1,-1)node[above]{\tiny $i-1$} -- (-.45,-.45);
    \draw[ultra thick,myred] (1,-1) -- (.45,-.45);
    \draw[ultra thick,blue] (-.45,-.45)-- (0,0);
    \draw[ultra thick,blue] (.45,-.45)-- (0,0);
  \draw[ultra thick,blue] (0,0)-- (0,1)node[below]{\tiny $i$};
 \draw[ultra thick,black,to-] (-1,0) ..controls (-.25,-.75) and (.25,-.75) .. (1,0);
}}\endxy
\end{gather}
\endgroup

\item Relations involving oriented strands and three adjacent colored strands:
\begin{equation} \label{eq:orslide6vertex}
  \begin{split} 
\xy (0,1)*{
\tikzdiagc[yscale=0.5,xscale=.5]{
  \draw[ultra thick,mygreen] (0,0)-- (0,.6);
  \draw[ultra thick,mygreen] (-1,-1)node[below]{\tiny $i+1$} -- (0,0);
  \draw[ultra thick,mygreen] (1,-1) -- (0,0);
  \draw[ultra thick,blue] (0,.6) -- (0,1);
  \draw[ultra thick,blue] (0,-1)node[below]{\tiny $i$} -- (0,0);
  \draw[ultra thick,blue] (0,0) -- (-.45, .45);
  \draw[ultra thick,myred] (-.45,.45) -- (-1,1)node[above]{\tiny $i-1$};
  \draw[ultra thick,blue] (0,0) -- (.45,.45);
  \draw[ultra thick,myred] (.45,.45) -- (1, 1);
\draw[ultra thick,black,to-] (-1,0) ..controls (-.25,.75) and (.25,.75) .. (1,0);  
}}\endxy
\ &=\  
\xy (0,0)*{
  \tikzdiagc[yscale=0.5,xscale=.5]{
    \draw[ultra thick,mygreen] (-1,-1)node[below]{\tiny $i+1$} -- (-.45,-.45);
    \draw[ultra thick,mygreen] (1,-1) -- (.45,-.45);
    \draw[ultra thick,blue] (0,-1)node[below]{\tiny $i$} -- (0,-.6);
    \draw[ultra thick,myred] (0,-.6) -- (0,0);
  \draw[ultra thick,myred] (0,0) -- (-1,1)node[above]{\tiny $i-1$};
  \draw[ultra thick,myred] (0,0) -- (1, 1);
   \draw[ultra thick,blue] (0,0) -- (0,1);
  \draw[ultra thick,blue] (-.45,-.45) -- (0,0); \draw[ultra thick,blue] (.45,-.45) -- (0,0);
\draw[ultra thick,black,to-] (-1,0) ..controls (-.25,-.75) and (.25,-.75) .. (1,0);  
}}\endxy
\\[1ex] 
\xy (0,1)*{
  \tikzdiagc[yscale=0.5,xscale=.5]{
    \draw[ultra thick,blue] (0,0)-- (0,.6);
    \draw[ultra thick,blue] (-1,-1)node[below]{\tiny $i$} -- (0,0);
    \draw[ultra thick,blue] (1,-1) -- (0,0);
    \draw[ultra thick,myred] (0,.6) -- (0,1)node[above]{\tiny $i-1$};
  \draw[ultra thick,mygreen] (0,-1)node[below]{\tiny $i+1$} -- (0,0);
  \draw[ultra thick,mygreen] (0,0) -- (-.45, .45);  \draw[ultra thick,blue] (-.45,.45) -- (-1,1);
  \draw[ultra thick,mygreen] (0,0) -- (.45,.45);  \draw[ultra thick,blue] (.45,.45) -- (1, 1);
\draw[ultra thick,black,to-] (-1,0) ..controls (-.25,.75) and (.25,.75) .. (1,0);  
}}\endxy
\ &=\  
\xy (0,0)*{
  \tikzdiagc[yscale=0.5,xscale=.5]{
    \draw[ultra thick,blue] (-1,-1)node[below]{\tiny $i$} -- (-.45,-.45);
    \draw[ultra thick,blue] (1,-1) -- (.45,-.45);
  \draw[ultra thick,mygreen] (0,-1)node[below]{\tiny $i+1$} -- (0,-.6); \draw[ultra thick,blue] (0,-.6) -- (0,0);
  \draw[ultra thick,blue] (0,0) -- (-1,1);
  \draw[ultra thick,blue] (0,0) -- (1, 1);
   \draw[ultra thick,myred] (0,0) -- (0,1)node[above]{\tiny $i-1$};
  \draw[ultra thick,myred] (-.45,-.45) -- (0,0); \draw[ultra thick,myred] (.45,-.45) -- (0,0);
\draw[ultra thick,black,to-] (-1,0) ..controls (-.25,-.75) and (.25,-.75) .. (1,0);  
}}\endxy
\end{split}
\end{equation}
\end{itemize}

By relations~\eqref{eq:dotrhuor}, the sum of all colored dumbbells in \eqref{eq:dumbbellsum} 
also commutes with oriented strands, so the corresponding morphism is also central in $\BSext_d$.  

In general, any object in $\BSext_d$ is isomorphic to a direct sum of objects of the form 
$\rB_{\rho}^n\rB_{\underline{w}}$, for some $n\in \mathbb{Z}$ and word $\underline{w}$ 
in $S$. By the relations in~\eqref{eq:orloop}, there is an isomorphism of vector spaces (and of algebras)    
\[
\left(\BSext_d\right)^0 \left(\rB_{\rho}^m, \rB_{\rho}^n\right) \cong 
\begin{cases}
\mathbb{C}\mathrm{id}_{\rB_{\rho}^m},&\text{if}\; m=n;\\
\{0\},&\text{else}.
\end{cases}
\]
Recall that $R=\widehat{\eBS}(\varnothing,\varnothing)$ is the polynomial algebra in the colored dumbbells. Then the isomorphism above generalizes to an isomorphism of graded $R$-$R$-bimodules
\[
\BSext_d \left(\rB_{\rho}^m, \rB_{\rho}^n\right) \cong 
\begin{cases}
R^{\tau^m},&\text{if}\; m=n;\\
\{0\},&\text{else},
\end{cases}
\]
where $\tau$ is the automorphism of $R$ which sends the $i$-colored dumbbell to the $i+1$-colored dumbbell, for any $i\in \widehat{I}$, and $R^{\tau^m}$ is the 
free rank-one $R$-$R$-bimodule with the normal left $R$-action and the right $R$-action 
twisted by $\tau^m$.

Moreover, the black oriented part and the non-oriented colored part of any diagram can be separated by the above relations, resulting in an isomorphism of graded $R$-$R$-bimodules 
\[
\BSext_d\left(\rB_{\rho}^m\rB_{\underline{u}}, \rB_{\rho}^m\rB_{\underline{v}}\right) \cong 
\begin{cases}
R^{\tau^m}\otimes_R \widehat{\eBS}_d\left(\rB_{\underline{u}}, \rB_{\underline{v}}\right), & 
\text{if}\; m=n;\\
\{0\},&\text{else}.
\end{cases}
\]   
In particular, this implies that the natural embedding 
$\widehat{\eBS}_d\hookrightarrow \BSext_d$ is full. For the proofs of these results, 
see~\cite[Section 3.3]{elias2018}. 

\begin{defn}
The {\em extended diagrammatic Soergel category} $\Sext_d$ is the 
idempotent completion of $\left(\BSext_d\right)^{\mathrm{sh}}$. 
\end{defn}

The above results on the Hom-spaces in $\BSext_d$ and Theorem~\ref{thm:categorification} 
imply the following generalization to the extended case, see~\cite[Theorem 2.5]{mackaay-thiel}.

\begin{thm}\label{thm:extcategorification}
For any $n\in \mathbb{Z}$ and $w\in \Saff_d$, the object $\rB_{\rho}^n\rB_w\in \Sext_d$ 
is indecomposable. Moreover, the $\mathbb{Z}[q,q^{-1}]$-linear map 
\begin{gather*}
\left(\eaffh\right)^{\mathbb{Z}[q,q^{-1}]} \to [\Sext_d]_{\oplus}\\
\rho^n b_w\mapsto \rB_{\rho}^n \rB_w, \quad n\in \mathbb{Z}, w \in  \Saff_d
\end{gather*}
is an isomorphism of algebras.
\end{thm}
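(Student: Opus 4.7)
The plan is to deduce Theorem~\ref{thm:extcategorification} from Theorem~\ref{thm:categorification} together with the Hom-space computations in $\BSext_d$ stated just before the theorem. The key observation is that tensoring on the left with $\rB_{\rho}^n$ is an auto-equivalence of $\BSext_d$: its quasi-inverse is given by $\rB_{\rho}^{-n}\otimes -$, and the unit and counit are the morphisms in \eqref{eq:orinv}–\eqref{eq:orloop}. This means that $\rB_{\rho}^n\otimes-$ preserves indecomposability and non-isomorphisms, and induces auto-equivalences on $\BSext_d^{\mathrm{sh}}$ and its Karoubi envelope $\Sext_d$.

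For indecomposability, I would apply the explicit isomorphism of graded $R$-$R$-bimodules
\[
\BSext_d\bigl(\rB_{\rho}^n\rB_{\underline{w}},\rB_{\rho}^n\rB_{\underline{v}}\bigr)\cong R^{\tau^n}\otimes_R\widehat{\eBS}_d\bigl(\rB_{\underline{w}},\rB_{\underline{v}}\bigr)
\]
together with fullness of $\widehat{\eBS}_d\hookrightarrow\BSext_d$. Taking endomorphisms of $\rB_{\rho}^n\rB_w$ and restricting to degree zero yields an algebra isomorphism $\End_{\Sext_d}(\rB_{\rho}^n\rB_w)\cong \End_{\Saff_d}(\rB_w)$. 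Since the latter is local by Theorem~\ref{thm:categorification}, so is the former, and hence $\rB_{\rho}^n\rB_w$ is indecomposable in $\Sext_d$.

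For the map being a well-defined $\mathbb{Z}[q,q^{-1}]$-algebra homomorphism, I would observe that the monoidal structure yields $\rB_{\rho}^m\cdot\rB_{\rho}^n\cong\rB_{\rho}^{m+n}$ (via \eqref{eq:orinv}) and, from the crossings in \eqref{eq:orientedcoloredgens2} together with the invertibility relation \eqref{eq;orReidII}, an isomorphism $\rB_{\rho}\rB_i\cong\rB_{i+1}\rB_{\rho}$ in $\BSext_d$ for every $i\in\mathbb{Z}/d\mathbb{Z}$, categorifying $\rho b_i\rho^{-1}=b_{i+1}$. Combining these isomorphisms shows that $[\rB_{\rho}^n\rB_w]\cdot[\rB_{\rho}^m\rB_u]=[\rB_{\rho}^{n+m}\rB_{\rho^{-m}w\rho^m}\rB_u]$, which matches the product $\rho^n b_w\cdot\rho^m b_u=\rho^{n+m}b_{\rho^{-m}w\rho^m}b_u$ in $(\eaffh)^{\mathbb{Z}[q,q^{-1}]}$; this also confirms the remaining Hecke relations categorically via the corresponding relations for $\widehat{\eS}_d$.

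Finally, for bijectivity of the induced map on Grothendieck groups, I would check surjectivity and injectivity separately. Surjectivity follows because every object of $\BSext_d$ is, by construction, a finite direct sum of objects of the form $\rB_{\rho}^n\rB_{\underline{w}}$, and Theorem~\ref{thm:categorification} (transported from $\widehat{\eBS}_d$ via the full embedding and applied after the auto-equivalence $\rB_{\rho}^n\otimes-$) decomposes each such object into summands $\rB_{\rho}^n\rB_w\langle t\rangle$. Injectivity reduces, via the Hom-space isomorphism above, to showing that $\rB_{\rho}^m\rB_u\not\cong\rB_{\rho}^n\rB_v$ unless $m=n$ and $u=v$: if $m\ne n$ the Hom-space itself vanishes, and if $m=n$ the auto-equivalence $\rB_{\rho}^{-n}\otimes-$ sends these objects to $\rB_u$ and $\rB_v$ in $\widehat{\eS}_d$, which are pairwise non-isomorphic by Theorem~\ref{thm:categorification}. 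The only step requiring genuine care is the verification that the crossings do provide honest isomorphisms $\rB_{\rho}\rB_i\cong\rB_{i+1}\rB_{\rho}$, which follows from \eqref{eq;orReidII} and the two-sided nature of these relations (obtained by rotation using cyclicity).
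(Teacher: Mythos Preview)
Your proposal is correct and follows essentially the same approach as the paper, which simply invokes the preceding Hom-space computations for $\BSext_d$ together with Theorem~\ref{thm:categorification} and cites \cite[Theorem 2.5]{mackaay-thiel} for the details; you have spelled out precisely those details (auto-equivalence via $\rB_\rho^n\otimes-$, locality of endomorphism rings, vanishing of Hom for $m\ne n$, and the crossing isomorphisms $\rB_\rho\rB_i\cong\rB_{i+1}\rB_\rho$). The only minor point worth noting is that the Hom-space isomorphism is stated for Bott--Samelson objects, so to apply it to $\rB_w$ you should remark that the auto-equivalence $\rB_\rho^n\otimes-$ extends to the Karoubi envelope $\Sext_d$, which it does automatically.
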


%
%


\input{sections/Rcomplexes.tex}
\input{sections/evfunct.tex}
\section{Evaluation birepresentations and finitary covers}\label{sec:bireps}

\subsection{Recollections on birepresentation theory}
In the following, we will work with graded (finitary or triangulated) birepresentations of graded, additive bicategories. The particular bicategory we are interested in is, of course, $\Sext_d$, which we view as a bicategory with one object in the usual way.

We call a graded, $\mathbb{C}$-linear, additive category $\mathcal{A}$ 
{\em graded-finitary} if $\mathcal{A}^{\mathrm{sh}}$ is idempotent complete, morphism spaces between indecomposables are finite-dimensional and there are 
only finitely many isomorphism classes of indecomposables up to isomorphism and grading shift. 
Note that $\mathcal{A}$ need not be finitary, because the Hom-spaces might be infinite-dimensional, although they are finite-dimensional in each degree. This is why we write {\em 
graded-finitary} and not {\em graded, finitary}. We denote the $2$-category of graded, resp. graded-finitary, $\mathbb{C}$-linear, additive categories, degree-preserving $\mathbb{C}$-linear functors and natural transformations by $\mathfrak{A}_{\mathbb{C}}^{g}$, resp. $\mathfrak{A}_{\mathbb{C}}^{gf}$. A {\em(locally) graded, additive bicategory} $\cC$ is one whose morphism categories are enriched over $\mathfrak{A}_{\mathbb{C}}^{g}$ and a {\em (locally) graded-finitary bicategory} $\cC$ is one whose 
morphism categories are enriched over $\mathfrak{A}_{\mathbb{C}}^{gf}$ and whose identity $1$-morphisms are indecomposable. Note that, to shorten the string of adjectives, 
we drop the adjective $\mathbb{C}$-linear, even though it is implicit in the enrichment. A {\em graded, additive} (resp. {\em graded-finitary}) {\em birepresentation} is a 
degree-preserving pseudofunctor from $\cC$ to $\mathfrak{A}_{\mathbb{C}}^{g}$ (resp. $\mathfrak{A}_{\mathbb{C}}^{gf}$).

Since we are mainly interested in $\Sext_d$, we will also abuse notation and call additive (bi)cate\-gories of the form $\mathcal{A}^{\mathrm{sh}}$ graded-finitary provided $\mathcal{A}$ is. Similarly, given a graded-finitary birepresentation $\mathbf{M}$ of a graded, additive 
bicategory $\cC$, we will also call the birepresentation $\mathbf{M}^{\mathrm{sh}}$ of 
$\cC^{\mathrm{sh}}$ (which acts on categories $\mathbf{M}(\mathtt{i})^{\mathrm{sh}}$, for objects $\mathtt{i}$, via functors which commute with shifts) graded-finitary. For more detail on these constructions, we refer to \cite[Section 2.6]{mmmtz2019}.

We will also be considering triangulated birepresentations of graded, additive bicategories. Denote by $\mathfrak{T}_\mathbb{C}$ the bicategory of triangulated, $\mathbb{C}$-linear categories, ($\mathbb{C}$-linear) triangulated functors and natural transformations. A {\em triangulated birepresentation} of a $\mathbb{C}$-linear, additive bicategory $\cC$ is a 
($\mathbb{C}$-linear) pseudofunctor from $\cC$ to $\mathfrak{T}_\mathbb{C}^{gf}$. In order to consider graded versions, we restrict ourselves to the $2$-full subbicategory $\mathfrak{T}_\mathbb{C}^{g}$ of $\mathfrak{T}_\mathbb{C}$ whose objects are triangulated categories of the form $\kb(\mathcal{A}^{\mathrm{sh}})$ for a graded, $\mathbb{C}$-linear, additive category $\mathcal{A}$, and whose functors are degree-preserving triangulated functors. 
 A {\em graded-triangulated birepresentation} of an additive, graded bicategory $\cC$ is then a degree-preserving ($\mathbb{C}$-linear) pseudofunctor from $\cC$ to $\mathfrak{T}_\mathbb{C}^{g}$.

Similarly to the finitary case above, we will call a birepresentation graded, triangulated if 
a bicategory of the form $\cC^{\mathrm{sh}}$ acts on triangulated categories of the form $\mathcal{T}^{\mathrm{sh}}$ via triangulated functors commuting with shifts. These are birepresentations obtained by taking a graded birepresentation of $\cC$ acting on $\mathcal{T}$, closing under shifts, and then restricting to morphisms of degree zero. 

In some cases, graded-finitary birepresentations will have an additional shift functor (coming from the homological shift in a triangulated birepresentation), with respect to which morphisms in the underlying categories will have degree zero. We call such birepresentations bigraded-finitary.

Given a (locally) additive, graded bicategory, the set of isomorphism classes of indecomposable $1$-morphisms up to grading shift can be given three natural partial preorders: the {\em left} 
preorder ($[\mathrm{F} ]\leq_L [\mathrm{G}]$ if and only if $[\mathrm{G}]$ appears as a direct summand of $[\mathrm{HF}]$ for some $1$-morphism $\mathrm{H}$), the {\em right} preorder 
($[\mathrm{F} ]\leq_R [\mathrm{G}]$ if and only if $[\mathrm{G}]$ appears as a direct summand of $[\mathrm{FH}]$ for some $1$-morphism $\mathrm{H}$) and the {\em two-sided} preorder 
($[\mathrm{F} ]\leq_J [\mathrm{G}]$ if and only if $[\mathrm{G}]$ appears as a direct summand of $[\mathrm{H_1FH_2}]$ for some $1$-morphisms $\mathrm{H_1}, \mathrm{H_2}$), and the corresponding equivalence classes are called {\em left, right} and 
{\em two-sided cells}, respectively.

If $\cC$ is graded-finitary, we can associate to any left cell a so-called {\em graded cell $2$-representation}, which is the quotient of the left $2$-ideal in $\cC$ generated by the identities on the $1$-morphisms in the cell, by the unique maximal ideal of the resulting birepresentation (i.e. the unique maximal ideal of the underlying categories which is stable under the action of $\cC$). For more details (in the ungraded case, but the graded one is analogous), 
see e.g. \cite[Section 3.3]{MM5}.

\subsection{Finitary covers of evaluation cell birepresentations}

Let $\mathbf{M}$ be a graded-finitary birepresentation of $\eS_d$, for any 
$d\in \mathbb{N}_{\geq 2}$. Then $\kb(\mathbf{M})$, as a graded, triangulated 
birepresentation of $\kb(\eS_d)$, induces a graded, triangulated birepresentation of 
$\Sext_d$, the {\em evaluation birepresentation} $\mathbf{M}^{\Ev_{r,s}}$, resp. $\mathbf{M}^{\Ev'_{r,s}}$, by pull-back through the evaluation functors $\Ev_{r,s}$, resp. $\Ev'_{r,s}$, for any $r,s\in \mathbb{Z}$. 

In this subsection we show that, if $\mathbf{M}$ is a cell birepresentation of $\eS_d$, then $\mathbf{M}^{\Ev_{r,s}}$ has a bigraded-finitary cover in the following sense. 

\begin{defn}\label{defn:fincov}
A {\em bigraded-finitary cover} of a graded, triangulated birepresentation 
$\mathbf{N}$ of a graded, additive bicategory $\mathcal{C}$ is a bigraded-finitary 
birepresentation $\mathbf{L}$ of $\mathcal{C}$ 
together with a faithful morphism of linear additive bigraded birepresentations 
$\Phi\colon \mathbf{L}\to \mathbf{N}$ whose essential image generates $\mathbf{N}$ 
as a graded triangulated category.
\end{defn}

\begin{prop}\label{prop:fincovgen}
Let $\mathbf{M}$ be the graded cell birepresentation associated to some left 
cell $\mathcal{L}$ of $\eS_d$. Then $\mathbf{M}^{\Ev_{r,s}}$ has a 
bigraded-finitary cover.
\end{prop}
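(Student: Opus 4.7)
The strategy is to realise the cover $\mathbf{L}$ concretely as a bigraded-additive subcategory of $\kb(\mathbf{M})$ whose objects form the $\Sext_d$-orbit of the indecomposables of $\mathbf{M}$ under the evaluation action. Let $\{X_w : w \in \mathcal{L}\}$ be representatives for the indecomposables of $\mathbf{M}$ up to grading shift and put
\[
Y_{n,w} := \Ev_{r,s}(\rB_\rho^n) \cdot X_w = \rT_\rho^n \cdot X_w \langle nr\rangle[ns] \in \kb(\mathbf{M}),
\]
for $n \in \mathbb{Z}$ and $w \in \mathcal{L}$. My candidate for $\mathbf{L}$ is the bigraded-additive closure inside $\kb(\mathbf{M})$ of $\{Y_{n,w} : 0 \leq n \leq d-1, w \in \mathcal{L}\}$.

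The essential input is that $\rT_\rho^d$ acts on $\mathbf{M}$ as a pure bigrading shift, i.e.\ $\rT_\rho^d \cdot X_w \cong X_w\langle k_\mathcal{L}\rangle[\ell_\mathcal{L}]$ for integers $k_\mathcal{L}, \ell_\mathcal{L}$ depending only on the cell. This is the categorical shadow of the fact that $\rho^d$ is central in $\widehat{\Sy}_d^{\mathrm{ext}}$ and that its evaluation image is proportional to the full twist in $H_d$, which by Schur's lemma acts as a scalar on the irreducible $H_d$-module categorified by $\mathbf{M}$. Granted this, the candidate $\mathbf{L}$ has only $d\cdot|\mathcal{L}|$ indecomposables up to bigraded shift, and its morphism spaces are the bidegree-graded pieces of the finite-dimensional $\kb(\mathbf{M})$-Hom spaces, establishing bigraded-finitariness.

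To see that $\mathbf{L}$ is $\Sext_d$-stable, I would argue: the generator $\rB_\rho$ sends $Y_{n,w}$ to $Y_{n+1,w}$, the wrap-around at $n=d-1$ producing a bigraded shift of $Y_{0,w}$ by the previous paragraph. For $\rB_i$ with $1\leq i \leq d-1$, iterative use of the commutation isomorphisms between $\rT_\rho$ and $\rB_j$ developed in \fullref{sec:diagshortcutsII} (pulling the Rouquier factors past the colored strands) rewrites $\rB_i \rT_\rho^n$ as $\rT_\rho^n \rB_{\sigma^{-n}(i)}$ modulo controlled shifts, reducing the computation to the decomposition of $\rB_j X_w$ inside $\mathbf{M}$, which stays in $\mathbf{M}$ by hypothesis. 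The generator $\rB_0$ acts via $\Ev_{r,s}(\rB_0)=\Rrhoi \textcolor{blue}{\rB_1}\Rrho$, so its action on $Y_{n,w}$ factors through the $\rB_1$-action on $\mathbf{M}$ conjugated by $\rT_\rho$, landing again inside $\mathbf{L}$.

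The cover functor $\Phi\colon \kb(\mathbf{L})\to \kb(\mathbf{M}^{\Ev_{r,s}})$ is the one induced by the defining inclusion $\mathbf{L}\hookrightarrow \kb(\mathbf{M})$. Faithfulness is immediate since morphisms in $\mathbf{L}$ are by definition morphisms in $\kb(\mathbf{M})$; essential surjectivity and epimorphicity follow from the observation that every object of $\mathbf{M}^{\Ev_{r,s}}=\kb(\mathbf{M})$ is built (in the bounded homotopy category) from shifts of the indecomposables $X_w$, which lie in the image of $\Phi$ as the special objects $Y_{0,w}$. The principal obstacle is the shift property in the second paragraph: while decategorification makes it clear that $\rT_\rho^d$ must act by a scalar, a categorical proof seems to require either a direct diagrammatic verification inside $\kb(\eS_d)$ acting on the cell birepresentation (using a Rouquier-complex presentation of the full twist) or an appeal to external results on the action of the full twist complex on cell $2$-representations; this is where the bulk of the technical work will go.
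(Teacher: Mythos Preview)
Your approach is essentially identical to the paper's: define $\mathbf{L}$ as the bigraded-additive closure of $\{\rT_\rho^n X_w : 0\le n\le d-1,\ w\in\mathcal{L}\}$, use the full-twist shift property to get finitariness, use the $\rT_\rho$-$\rB_i$ commutation to get $\Sext_d$-stability, and take $\Phi$ to be the inclusion. The only substantive difference is that what you flag as the principal technical obstacle---that $\rT_\rho^d$ acts on $\mathbf{M}$ as a pure bigrading shift---is dispatched in the paper by a one-line citation to Elias--Hogancamp (categorical diagonalization of full twists, Proposition~4.31), so no bespoke diagrammatic argument is needed; once you cite that, your sketch is complete.
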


\begin{proof} By \cite[Proposition 4.31]{elias-hogancamp}, $\rT_{\rho}^d$ 
acts as $\mathrm{Id}\langle x\rangle [y]$ on $\mathbf{M}^{\Ev_{r,s}}$, for some $x,y\in \mathbb{Z}$. 
Let $\mathbf{L}$ be the closure under isomorphisms, direct sums, 
direct summands, grading and homological shifts of the 
$\Ev_{r,s}(\rT_{\rho}^i) \rB_w$, for $i\in \widehat{I}$ and $w\in \mathcal{L}$. 
Relation~\eqref{eq;orReidII} 
implies that $\mathbf{L}$ is a bigraded-finitary birepresentation of $\Sext_d$. 

The inclusion functor $\mathbf{L}\hookrightarrow \mathbf{M}^{\Ev_{r,s}}$ is a morphism of linear additive bigraded birepresentations and its essential image 
generates $\kb(\mathbf{M})$, which is the underlying graded triangulated category of 
$\mathbf{M}^{\Ev_{r,s}}$.   
\end{proof}

We refer to Corollary~\ref{cor:fincover} for an example demonstrating that $\Phi$ is not full in general. 

\begin{rem} It is easy to see that $\mathbf{L}$ is transitive, and it looks likely that calculations, using the explicit descriptions of the representing bimodules for the $\rB_w$ given in \cite[Section 4.3]{mmmtz2019}, one can verify that it is indeed simple transitive.
\end{rem}


\subsection{The zigzag algebras}

Let us first recall the {\em affine zigzag algebra} $\widehat{Z}_d$ over $\mathbb{C}$ associated to the $\widehat{A}_{d-1}$ Dynkin diagram. As is well-known, there are two isomorphism classes of 
affine zigzag algebras with invertible integer coefficients, and we use a specific 
representative of either one or the other depending on the parity of $d$. 

Let $e_i, \; i\in \widehat{I},$ denote the orthogonal idempotents associated to 
the vertices of the zigzag quiver 

\begin{equation}
\xymatrix{
&&&&\overset{0}{\bullet}\ar@/_2pc/[ddllll]\ar@/^2pc/[ddrrrr]&&&&\\\\
\overset{1 }{\bullet}\ar@/^0.5pc/[rr]\ar@/^1pc/[uurrrr]&& \overset{2}{\bullet}\ar@/^0.5pc/[ll]&& \cdots && \overset{d-2}{\bullet} \ar@/^0.5pc/[rr]&& \overset{d-1}{\bullet}\ar@/^0.5pc/[ll]\ar@/_1pc/[uullll]
}
\end{equation}

and $i_1\vert i_2\vert \ldots \vert i_k$ the path in the quiver 
from $i_k$ to $i_1$ via $i_{k-1},\ldots, i_2$. The relations in $\widehat{Z}_d$ are 
\begin{gather}
i\vert i+1 \vert i+2=0=i\vert i-1\vert i-2, \quad i\in \widehat{I}; \\ 
i\vert i+1\vert i = i\vert i-1\vert i, \quad i \in I; \\
0 \vert 1 \vert 0 = (-1)^d (0 \vert d-1 \vert 0). 
\end{gather}
For convenience, we also use the notation 
\[
\ell_i:= i\vert i+1\vert i,
\]
for any $i \in \widehat{I}$. This algebra has dimension $4d$, it is positively graded by putting the degree of every path 
equal to its length, and it is a graded Frobenius algebra 
with non-degenerate trace defined by 
\begin{equation}
\mathrm{tr}(\ell_i)=1\;\text{for every}\;i \in \widehat{I};\quad \mathrm{tr}(a)=0\;\text{when} \deg(a) \ne 2.
\end{equation}
This means that $\widehat{Z}_d^\star \cong \widehat{Z}_d\langle 2\rangle$ as graded left, resp. right, $\widehat{Z}_d$-modules. Define the non-degenerate bilinear pairing 
$\langle .,.\rangle\colon \widehat{Z}_d\otimes \widehat{Z}_d\to \mathbb{C}$ as usual 
\begin{equation}
\langle a,b\rangle :=\mathrm{tr}(ab),\; a,b\in \widehat{Z}_d,
\end{equation}
and recall that two bases of $\widehat{Z}_d$, say $\{a_i\mid i=1\ldots, 4d\}$ and 
$\{a_i^{\star}\mid 1,\ldots,4d\}$, are called {\em dual} to each other if they satisfy
\[
\langle a_i,a_j^{\star}\rangle =\delta_{i,j},\; i,j=1,\ldots,4d,
\]
where $\delta_{i,j}$ is the Kronecker delta. With respect to the bilinear form on 
$\widehat{Z}_d$,  there is a natural pair of dual bases 
$\{e_i,\ell_i ,(i\vert i\pm 1)\}_{ i\in \widehat{I}}$ and 
$\{e_i^{\star},\ell_i^{\star},(i\vert i\pm 1)^{\star}\}_{i\in \widehat{I}}$, such that  
\begin{gather}
e_i^\star= \ell_i,\; \ell_i^\star = e_i, \quad i \in \widehat{I}; \\
(i \vert (i\pm 1))^\star = (i\pm 1) \vert i, \quad i\in I; \\
 (0\vert (d-1))^{\star}=(-1)^d ((d-1)\vert 0).
\end{gather}
Note that $\widehat{Z}_d$ is symmetric when $d$ is even and only weakly symmetric when $d$ is odd. 

Let $\widehat{Z}_d{-}\mathrm{fgproj}$, resp. $\mathrm{fgproj}{-}\widehat{Z}_d$,  
be the category of finite-dimensional, graded, projective left, resp. right, 
$\widehat{Z}_d$-modules and degree-preserving module maps. The indecomposable 
objects in these categories are isomorphic to $\widehat{Z}_de_i\langle t\rangle$, resp. 
$e_i\widehat{Z}_d\langle t\rangle,$ for some $i\in \widehat{I}$ and $t\in \mathbb{Z}$.

Finally, let $\widehat{Z}_d{-}\mathrm{fgbiproj}{-}\widehat{Z}_d$ be the monoidal category of all finite-dimensional, graded, biprojective $\widehat{Z}_d{-}\widehat{Z}_d$-bimodules and degree-preserving bimodule maps. A bimodule is called biprojective if it is projective as 
a graded left module and as a graded right module, but not necessarily as a graded bimodule. 
Every indecomposable projective object in this category is isomorphic to
\[
\widehat{Z}_d e_i\otimes e_j\widehat{Z}_d\langle t \rangle,
\]
for some $i,j \in \widehat{I}$ and $t\in \mathbb{Z}$. The monoidal structure of 
$\widehat{Z}_d{-}\mathrm{fgbiproj}{-}\widehat{Z}_d$ is given by 
tensoring over $\widehat{Z}_d$ and the unit object is $\widehat{Z}_d$, which is biprojective 
but not projective as a bimodule over itself. Recall that any exact, graded 
endofunctor of $\widehat{Z}_d{-}\mathrm{fgproj}$ is 
naturally isomorphic to $B \otimes_{\widehat{Z}_d} -$, for some $B\in \widehat{Z}_d{-}\mathrm{fgbiproj}{-}\widehat{Z}_d$. Natural transformations between exact, graded 
endofunctors correspond to $\widehat{Z}_d{-}\widehat{Z}_d$-bimodule maps 
and the composition of endofunctors corresponds to the tensor product of the corresponding bimodules over $\widehat{Z}_d$. 

Let $\tau$ be the degree-preserving algebra automorphism of $\widehat{Z}_d$ induced by the counterclockwise rotation of the Dynkin diagram defined by  
\begin{equation}
e_i \mapsto e_{i+1},\quad 0\vert (d-1) \mapsto (-1)^d (1\vert 0),\quad i\vert j \mapsto (i+1) 
\vert (j+1), 
\end{equation}
for $i,j\in \widehat{I}$, such that $j=i\pm 1$ but $(i,j)\ne (0,d-1)$. Note that $\tau^{d}=\mathrm{id}$ when $d$ is even, and $(\tau)^{2d}=\mathrm{id}$ when $d$ is odd. By definition, the {\em twisted bimodule} 
\begin{equation}
\widehat{Z}^{\tau}_n\in \widehat{Z}_d{-}\mathrm{fgbiproj}{-}\widehat{Z}_d
\end{equation}
has underlying vector space $\widehat{Z}_d$, while the left and right 
$\widehat{Z}_d$-actions are defined by 
\begin{equation}
a\cdot_L b \cdot_R c:=ab\tau(c),
\end{equation}
for $a,b,c\in \widehat{Z}_d$. It is clear that $\widehat{Z}^{\tau}_d\cong \widehat{Z}_d$ 
as left and as right $\widehat{Z}_d$-modules, but not as 
$\widehat{Z}_d$-$\widehat{Z}_d$-bimodules.
As a consequence, $\widehat{Z}^{\tau}_d$ is
biprojective. It is, however, 
not projective 
as a $\widehat{Z}_d$-$\widehat{Z}_d$-bimodule. We record the existence of an isomorphism 
\begin{gather}\label{eq:lefttau1}
\widehat{Z}^{\tau^k}_d\otimes_{\widehat{Z}_d} \widehat{Z}^{\tau^m}_d \cong \widehat{Z}^{\tau^{k+m}}_d
\end{gather}
in $\widehat{Z}_d{-}\mathrm{fgbiproj}{-}\widehat{Z}_d$, for every pair $k,m\in \mathbb{Z}$.  

Note further that there exist isomorphisms of left, resp. right, 
$\widehat{Z}_d$-modules 
\begin{equation}\label{eq:lefttau2}
 \widehat{Z}_d^{\tau}\otimes_{\widehat{Z}_d} \widehat{Z}_d e_i  \cong \widehat{Z}_d e_{i+1}
\quad\text{and}\quad e_i \widehat{Z}_d \otimes_{\widehat{Z}_d} \widehat{Z}_d^{\tau}\cong e_{i-1} \widehat{Z}_d
\end{equation}
and, therefore, an isomorphism of $\widehat{Z}_d$-$\widehat{Z}_d$-bimodules
\begin{equation}\label{eq:lefttau3}
\widehat{Z}_d^{\tau}\otimes_{\widehat{Z}_d} \widehat{Z}_d e_i \otimes e_i\widehat{Z}_d  
\cong \widehat{Z}_d e_{i+1} \otimes e_{i+1}\widehat{Z}_d \otimes_{\widehat{Z}_d}\widehat{Z}_d^{\tau}
\end{equation}
for every $i\in \widehat{I}$.

The {\em zigzag algebra} $Z_d$ of finite type $A_{d-1}$ is by definition the idempotent
subalgebra 
\begin{equation}
(e_1+\cdots+e_{d-1})\widehat{Z}_d(e_1+\cdots+e_{d-1}).
\end{equation}

\subsection{The birepresentations}

Let $Z=Z_d$ denote the zigzag algebra of finite type $A_{d-1}$. Recall the finitary birepresentation $\mathbf{M}_d$ of $\eS_d$ acting on $Z$-$\mathrm{gproj}$, 
the finitary category of finite-dimensional, graded projective $Z$-modules, by graded, biprojective 
$Z$-$Z$-bimodules. Under this birepresentation, $\mathbbm{1}=R$ acts by tensoring (over $Z$) with $Z$ and each $\rB_i$ acts by tensoring (over $Z$) with $Ze_i\otimes e_iZ\langle 1\rangle$, for $i\in I$. The image of the generating Soergel diagrams is given by 
\begin{gather}
\begin{array}{lcrcl}
\mathbf{M}_d\biggl(
\xy (0,-4)*{
\tikzdiagc[scale=1]{
\begin{scope}[yscale=-.5,xscale=.5,shift={(5,-2)}] 
  \draw[ultra thick,blue] (-1,0) -- (-1, 1)node[pos=0, tikzdot]{} node[below]{\tiny $i$};
\end{scope}
}}
\endxy
\biggr)
& \colon & Z e_i \otimes e_i Z\langle 1 \rangle & \to  &Z \\
&& ae_i \otimes e_ib & \mapsto  & ae_ib,  
\end{array}
\\[2ex]
\begin{array}{lcrcl}
\mathbf{M}_d\biggl(
\xy (0,4)*{
\tikzdiagc[scale=1]{
\begin{scope}[yscale=-.5,xscale=.5,shift={(5,-2)}] 
  \draw[ultra thick,blue] (-1,0) -- (-1, 1)node[pos=1, tikzdot]{} node[above, yshift=14pt]{\tiny $i$};
\end{scope}
}}
\endxy
\biggr)
& \colon &  Z  & \to  & Z e_i \otimes e_i Z\langle 1 \rangle \\
&& e_j & \mapsto  &
\begin{cases}
(-1)^i \left(\ell_i \otimes e_i + e_i \otimes \ell_i\right), & j=i;\\
(-1)^i\left(j\vert i \otimes i \vert j\right), & j\pm 1=i,
\end{cases}
\end{array}
\\[2ex] 
\begin{array}{lcrcl}
\mathbf{M}_d\biggl(\xy (0,0)*{
\tikzdiagc[scale=1]{
\begin{scope}[yscale=.5,xscale=.5,shift={(8,2)}] 
   \draw[ultra thick,blue] (0,0)-- (0, 1) node[above]{\tiny $i$}; 
  \draw[ultra thick,blue] (-1,-1) -- (0,0) node[below, shift={(-0.5,-0.5)}]{\tiny $i$}; 
\draw[ultra thick,blue] (1,-1) -- (0,0) node[below, shift={(0.5,-0.5)}]{\tiny $i$};
\end{scope}
}}
\endxy
\biggr)
&\colon & Z e_i \otimes e_i  Z e_i \otimes e_i  Z \langle 2 \rangle & \to 
&  Z e_i \otimes e_i  Z \langle 1 \rangle \\
&& e_i \otimes e_i ae_i \otimes e_i  &\mapsto & (-1)^i \mathrm{tr}(e_i a e_i) e_i \otimes e_i 
\end{array}
\\[2ex]
\begin{array}{lcrcl}
\mathbf{M}_d\biggl(\xy (0,0)*{
\tikzdiagc[scale=1]{
\begin{scope}[yscale=.5,xscale=.5,shift={(8,2)}] 
   \draw[ultra thick,blue] (0,0)-- (0, 1) node[below, yshift=-14pt]{\tiny $i$}; 
  \draw[ultra thick,blue] (0,1) -- (-1,2) node[above]{\tiny $i$}; 
\draw[ultra thick,blue] (0,1) -- (1,2) node[above]{\tiny $i$};
\end{scope}
}}
\endxy
\biggr)
&\colon &  Z e_i \otimes e_i  Z \langle 1 \rangle 
&\to &  Z e_i \otimes e_i  Z e_i \otimes e_i  Z \langle 2 \rangle \\
&& e_i \otimes e_i &\mapsto & e_i \otimes e_i \otimes e_i,
\end{array}
\end{gather}
while all other generating Soergel diagrams are sent to zero. The proof that this is well-defined 
is a straightforward computation and similar to the proof of 
\cite[Theorem I]{mackaay-tubbenhauer}. It is easy to see that this birepresentation decategorifies 
to the representation $M_d$ of $H_d$, given in \eqref{eq:M}.

Now, consider the triangulated birepresentations $\mathbf{M}^{\Ev_{r,s}}$ and 
$\mathbf{M}^{\Ev'_{-r,-s}}$ of $\Sext_d$, for $r,s\in \mathbb{Z}$, obtained by 
pulling $\kb(\mathbf{M})$ back through the evaluation functors $\Ev_{r,s}$ and $\Ev'_{-r,-s}$. 
These decategorify to $M^{\ev_{a}}$ and $M^{\ev_{a^{-1}}}$ 
defined in \eqref{eq:action-rho} and \eqref{eq:action-prime-rho}, respectively, where 
$a=(-1)^s q^r$. The case $(r,s)=(d-2, 2-d)$ is somewhat special, as it corresponds to the so-called 
{\em Tate twist}, but the general case can easily be derived from this one by shifting the 
bigrading in all arguments below. To keep the notation simple, we therefore consider 
$\mathbf{M}^{\Ev_{r,s}}$ for the fixed choice $(r,s)=(d-2,2-d)$ first.

Define the complex
\begin{equation}
X_0:= \bigl(
\underline{Ze_{d-1}\langle 1\rangle} \to Ze_{d-2}\langle 2\rangle \to\cdots \to 
Ze_1\langle d-1\rangle
\bigr)
\end{equation}
where the term $Ze_{d-1}\langle 1\rangle$ is in homological degree $0$ and the differential in position $i$ is given by right multiplication by $d-i-1\vert d-i-2$. We further 
set $X_i:=Ze_i$, for $i\in I$. 

In Proposition~\ref{prop:fincovgen}, the rank of the bigraded-finitary cover $\mathbf{L}$ 
of an evaluation cell birepresentation is not necessarily minimal. In the following proposition, 
we give a minimal finitary cover for $\mathbf{M}^{\Ev_{r,s}}$.

\begin{prop}\label{prop:invariant}
The bigraded-finitary subcategory 
\[
\widehat{\mathbf{M}}_{d-2,2-d}:=
\mathrm{add}\left\{(X_0\oplus X_1\oplus\cdots \oplus X_{d-1})
\langle i\rangle[j]\mid i,j\in \mathbb{Z} \right\}
\]
is stable under the action of $\Sext_d$, and hence carries the structure of a 
finitary birepresentation of $\Sext_d$, which we denote by the same symbol. 
\end{prop}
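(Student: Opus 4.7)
The plan is to check stability of $\widehat{\mathbf{M}}_{d-2,2-d}$ under the action of a generating set of $1$-morphisms of $\Sext_d$, using that the subcategory is closed under grading shifts, homological shifts, direct sums and direct summands. Since $\Sext_d$ is monoidally generated by $\rB_0,\rB_1,\ldots,\rB_{d-1}$ together with $\rB_\rho^{\pm 1}$, and since $\rB_0\cong\rB_\rho^{-1}\rB_1\rB_\rho$ in $\Sext_d$, it will suffice to check stability under $\rB_1,\ldots,\rB_{d-1}$ and $\rB_\rho^{\pm 1}$, and only on the generating objects $X_0,X_1,\ldots,X_{d-1}$.

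The generators $\rB_i$ with $1\leq i\leq d-1$ act by tensoring termwise with the biprojective bimodule $Ze_i\otimes e_iZ\langle 1\rangle$. Applied to $X_j=Ze_j$ with $j\geq 1$ this yields $Ze_i\otimes e_iZe_j\langle 1\rangle$, which is a finite direct sum of grading shifts of $X_i$ since $e_iZe_j$ is a finite-dimensional graded vector space (zero for $|i-j|>1$); applied to the complex $X_0$ it yields a bounded complex whose terms are again such sums. Both outputs lie in $\widehat{\mathbf{M}}_{d-2,2-d}$.

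The substantive work is the action of $\rB_\rho^{\pm 1}=\rT_\rho^{\pm 1}\langle\pm(d-2)\rangle[\mp(d-2)]$. I will prove three isomorphisms in $\kb(\mathbf{M}_d)$: (a) $\rT_\rho\cdot Ze_j\simeq Ze_{j+1}\langle 2-d\rangle[d-2]$ for $1\leq j\leq d-2$; (b) $\rT_\rho\cdot Ze_{d-1}\simeq X_0\langle c_1\rangle[c_2]$ for explicit shifts $c_1,c_2$; and (c) $\rT_\rho\cdot X_0\simeq Ze_1\langle c_1'\rangle[c_2']$ for explicit shifts $c_1',c_2'$, together with the analogous statements for $\rT_\rho^{-1}$. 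These are strongly suggested by decategorification through~\eqref{eq:action-rho} and~\fullref{thm:extcategorification}, but of course have to be verified at the categorical level.

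For (a) I would iteratively apply the factors $\rT_{d-1},\rT_{d-2},\ldots,\rT_1$ of $\rT_\rho$ to $Ze_j$. Whenever $|i-j|>1$ the tensor $\rB_i\otimes_Z Ze_j$ vanishes and $\rT_i$ merely contributes the shift $\langle 1\rangle[-1]$; the non-trivial steps, when $i\in\{j-1,j,j+1\}$, would be handled by Gaussian elimination combined with the homotopy equivalences of~\fullref{lem:RRB} (and their braid-move variants, which allow one to propagate a cancellation past neighbouring Rouquier complexes). The main obstacle is (b): one must show that applying $\rT_\rho$ to $Ze_{d-1}$ produces precisely the zigzag complex defining $X_0$, with differentials given by right multiplication by the paths $k|k-1$. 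The key point is that at each inductive step the Rouquier differential $\rB_k\to R\langle 1\rangle$, evaluated on the current simplified complex, contributes exactly the multiplication-by-$k|k-1$ map, and the Gaussian cancellation at each stage, governed by the structure of $e_kZe_{k-1}$ in the zigzag algebra, should annihilate all extraneous summands. Claim (c) will then follow by combining (a) and (b) termwise on $X_0$, or alternatively by identifying $X_0\simeq\rT_\rho\cdot Ze_{d-1}$ up to shift from (b) and computing $\rT_\rho\cdot X_0\simeq \rT_\rho^2\cdot Ze_{d-1}$ via iteration of (a).
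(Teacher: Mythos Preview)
Your reduction to checking stability under $\rB_1,\ldots,\rB_{d-1}$ and $\rB_\rho^{\pm 1}$ is correct, and your plan for $\rT_\rho$ via Gaussian elimination is along the right lines (though your appeal to \fullref{lem:RRB} is misplaced: that lemma concerns Soergel bimodules, not the zigzag $Z$-modules, and is not what is used here). The paper proceeds similarly, but works directly with the full bimodule complex~\eqref{eq:imageTrho} representing $\rT_\rho$ rather than iterating the individual $\rT_i$.

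There is, however, a genuine gap in your treatment of $\rB_i$ acting on $X_0$. You write that $\rB_i(X_0)$ is ``a bounded complex whose terms are again such sums'' and conclude that it therefore lies in $\widehat{\mathbf{M}}_{d-2,2-d}$. This does not follow: $\widehat{\mathbf{M}}_{d-2,2-d}$ is by definition an \emph{additive} closure, not a triangulated one. An object of $\kb(Z\text{-gproj})$ whose chain groups happen to be shifts of the $X_j$ is not automatically homotopy equivalent to a direct sum of shifted $X_j$'s; for instance, a nontrivial two-term complex $X_a\to X_b$ need not split. What must be shown is precisely that each $\rB_i(X_0)$ \emph{reduces}, after Gaussian elimination, to an object of the additive closure. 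The paper does this explicitly: for $2\le i\le d-2$ the three-term complex $Ze_i\otimes(e_iZe_{i+1}\to e_iZe_i\to e_iZe_{i-1})$ is null-homotopic because the underlying complex of vector spaces is split exact; for $i=1$ and $i=d-1$ a single Gaussian elimination leaves $Ze_1\langle d\rangle[2-d]$ and $Ze_{d-1}$, respectively. Without these computations your argument is incomplete, since the very point of the proposition is that the additive subcategory is (non-obviously) invariant, not merely that its triangulated closure is.
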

\begin{proof}
We need to check stability under $\rB_1\ldots, \rB_{d-1}$ and $\rT_{\rho}$. The action of $\rB_1\ldots, \rB_{d-1}$ stabilises $\mathrm{add}\left\{X_1\oplus\cdots \oplus X_{d-1}\langle i\rangle[j]\mid i,j\in \mathbb{Z} \right\}$ since this is just the finitary birepresentation of $\eS_d$ described above. We therefore first compute $\rB_i(X_0)$ for $i\in I$ and then verify stability of $\mathrm{add}\left\{X_1\oplus\cdots \oplus X_{d-1}\langle i\rangle[j]\mid i,j\in \mathbb{Z} \right\}$ under $\rT_{\rho}$.

Notice that, for $i\in \{2,\cdots d-2\}$, $\rB_i(X_0)$ is given by the complex
$$Ze_i\otimes \biggl( e_iZe_{i+1}\langle d-i \rangle \to e_iZe_{i}\langle d-i +1 \rangle \to e_iZe_{i-1}\langle d-i+2 \rangle \biggr). $$
Notice that, as a $Z$-module, this is just $Ze_i$ tensored with a complex of vector spaces, so it suffices to argue that said complex of vector spaces is null-homotopic. This is indeed the case since the first map embeds a one-dimensional space into a two-dimensional space, and the second map is a surjection onto another one-dimensional space. It follows that the whole complex is null-homotopic.

Further, $\rB_1(X_0)$ is given by
$$Ze_1\otimes \biggl( e_1Ze_2\langle d-1\rangle \to  e_1Ze_1\langle d\rangle \biggr)$$
with map $1|2\mapsto \ell_1$, which is injective, hence the summand surviving Gaussian elimination is $Ze_1\otimes e_1\langle d\rangle $ in homological degree $d-2$. Thus 
the result is homotopy equivalent to $Ze_1\langle d\rangle [2-d]= X_1\langle d \rangle [2-d]$.

On the other extreme, $\rB_{d-1}(X_0)$ is given by
$$Ze_{d-1}\otimes \biggl( \underline{e_{d-1}Ze_{d-1}\langle 2\rangle }\to  e_{d-1}Ze_{d-2}\langle 3\rangle \biggr)$$
where the map is right multiplication by $d-1|d-2$, which is surjective. The kernel is thus $Ze_{d-1}\otimes \ell_{d-1}\langle 2\rangle$ and the result is homotopy equivalent to $Ze_{d-1}= X_{d-1}$ without any shifts. 

Thus $\mathrm{add}\left\{X_0\oplus X_1\oplus\cdots \oplus X_{d-1}\langle i\rangle[j]\mid i,j\in \mathbb{Z} \right\}$ is stable under the action of $\rB_1,\ldots,\rB_{d-1}$.

It remains to show that $\widehat{\mathbf{M}}$ is stable under the action of 
$\rT_{\rho}$. Recall from Section~\ref{sec:defevalfunctor}
that $\Ev_{d-2,2-d}(\rT_{\rho})=
\rT^{-1}_{1}\cdots \rT^{-1}_{d-1}\langle d-2\rangle [2-d]$ and 
\begin{equation*}
  \rT_i^{-1} = R\langle -1\rangle \xra{
\mspace{15mu}
    \xy (0,0)*{
  \tikzdiagc[yscale=.3,xscale=.25]{
  \draw[ultra thick,blue] (-1,0) -- (-1, 1)node[pos=0, tikzdot]{};
}}\endxy
\mspace{15mu}
}\underline{\Bi}. 
\end{equation*} 

Using the definition of $\mathbf{M}_d$ above, it is easy to see that the complex representing 
$\Ev_{d-2,2-d}(\rT_{\rho})$ is

\begin{equation}\label{eq:imageTrho}
\resizebox{\textwidth}{!}{
\xymatrix@R=3pt
{
&\underline{ Ze_1\otimes e_1Z \langle 1 \rangle} \ar[dr]&&&&\\
&&Ze_1\otimes e_2Z\langle 2 \rangle \ar[dr]&&&\\
&\underline{ Ze_2\otimes e_2Z\langle 1 \rangle} \ar[ur]\ar[dr] &&\ddots\ar[dr]&&\\
&&\ddots&&Ze_1\otimes e_{d-2}Z\langle d-2 \rangle \ar[dr]&\\
Z\langle -1\rangle \ar[uuuur]\ar[uur]\ar[ddddr]\ar[ddr]&&&\ar[dr]\ar[ur]&&
Ze_1\otimes e_{d-1}Z\langle d-1 \rangle \\
&&\iddots&&Ze_2\otimes e_{d-1}Z\langle d-2 \rangle \ar[ur]&\\
&\underline{Ze_{d-2}\otimes e_{d-2}Z\langle 1 \rangle }\ar[dr]\ar[ur]&&\ar[ur]\iddots&&\\
&&Ze_{d-2}\otimes e_{d-1}Z\langle 2 \rangle \ar[ur]&&&\\
&\underline{Ze_{d-1}\otimes e_{d-1}Z\langle 1 \rangle }\ar[ur]&&&&
} }
\end{equation}
Here, in the first differential, whose source is $Z\langle -1\rangle$, the component mapping to $Ze_i\otimes e_iZ \langle 1 \rangle$
is given by 
\[
e_j \mapsto  
\begin{cases}
\ell_i \otimes e_i + e_i \otimes \ell_i, &\mathrm{if}\, i=j;\\  
j\vert i \otimes i \vert j, &\mathrm{if}\, i\ne j.
\end{cases}
\] 
The other differentials are all vectors of $Z$-$Z$-bimodule maps which are equal to the tensor 
product of $\pm \mathrm{id}$ on one tensor factor and $i\vert i+1$, for some $i=1,\ldots, d-2$, 
on the other tensor factor. For our arguments below, the signs of these maps are not important.

We are first going to prove that $\Ev_{d-2,2-d}(\rT_{\rho})(X_i)\simeq X_{i+1}$, for any $i=1,\ldots, d-2$. Since 
$e_jZe_i=\{0\}$ when $\vert i-j\vert >1$, the non-zero part of the complex corresponding to 
$\Ev_{d-2,2-d}(\rT_{\rho})(X_i)$ is

\begin{equation*}
\resizebox{\textwidth}{!}{
\xymatrix
{
&\underline{ Ze_{i-1}\otimes e_{i-1}Ze_i\langle 1 \rangle }\ar[r] \ar[dr]& 
Ze_{i-2}\otimes e_{i-1}Ze_i\langle 2 \rangle  \ar[r]\ar[dr]&\cdots&
Ze_1\otimes e_{i-1}Ze_i \langle i-1 \rangle \ar[dr]&&\\
Ze_i\langle -1 \rangle \ar[ur]\ar[r]\ar[dr]&\underline{ Ze_{i}\otimes e_{i}Ze_i\langle 1 \rangle }\ar[r]\ar[dr]& Ze_{i-1}\otimes e_{i}Ze_i\langle 2 \rangle  \ar[r]\ar[dr]&\cdots&
Ze_2\otimes e_{i}Ze_i\langle i-1 \rangle \ar[r]\ar[dr]&Ze_1 \otimes e_{i}Ze_i\langle i 
\rangle \ar[dr]&\\
& {\color{purple}\underline{Ze_{i+1}\otimes e_{i+1}Ze_i\langle 1 \rangle }}\ar[r]& 
Ze_{i}\otimes e_{i+1}Ze_i\langle 2 \rangle \ar[r]&\cdots&Ze_3\otimes e_{i+1}Ze_i
\langle i-1 \rangle \ar[r]&Ze_2\otimes e_{i+1}Ze_i \langle i \rangle \ar[r]&
Ze_1\otimes e_{i+1}Ze_i\langle i+1 \rangle \\
} }
\end{equation*}
By Gaussian elimination, one can then see that this is homotopy equivalent to the purple 
$Ze_{i+1}\otimes e_{i+1}Ze_i\langle 1 \rangle$ in homological degree zero, which is isomorphic to $X_{i+1}$. 
To explain this, we identity each vertex of the diagram above by its pair of coordinates (row number, column number), 
where we number the rows of the complex by 1,2,3 from top to bottom and 
the columns by their homological degree. As in the diagram above, we omit the signs of all maps below, 
since they are not important for our argument. Using these conventions, first note that the the part of the complex $(2,-1)\to (2,0)\to (3,1)$ is given by 
$$Ze_i \otimes \biggl(\Bbbk\langle -1\rangle \to \underline{e_iZe_i\langle 1\rangle} \to e_{i+1}Ze_i\langle 2\rangle\biggr)$$
where the complex of vector spaces is split by the same arguments as above and hence null-homotopic. Thus these
three terms cancel in the Gaussian elimination procedure. Similarly, every part of the complex of the form 
$(1,j)\to (2,j+1)\to (3,j+2)$, for $j=0,\ldots, i-2$, is given by 
$$Ze_{i-j-1}\otimes \biggl(e_{i-1}Ze_i\langle j+1\rangle \to  e_iZe_i \langle j+2\rangle\to e_{i+1}Ze_i\langle j+3\rangle   \biggr) $$
is split and hence null-homotopic.
Hence all these triples of terms cancel in the Gaussian elimination procedure, which in the end only leaves the purple one, proving the desired homotopy equivalence.

The next homotopy equivalence we are going to prove is $\Ev_{d-2,2-d}(\rT_{\rho})(X_{d-1})\simeq X_0$. The non-zero part of the complex $\Ev_{d-2,2-d}(\rT_{\rho})(X_{d-1})$ is 

\begin{equation*}
\resizebox{\textwidth}{!}{
\xymatrix
{
& \underline{Ze_{d-2} \otimes e_{d-2}Ze_{d-1}\langle 1\rangle}\ar[r]\ar[ddr] & Ze_{d-3} \otimes e_{d-2}Ze_{d-1}\langle 2\rangle 
\ar[r]\ar[ddr]&\cdots \ar[r]\ar[ddr] & Ze_1\otimes e_{d-2}Ze_{d-1}\langle d-2 \rangle \ar[dr] &  \\
Ze_{d-1}\langle -1\rangle \ar[ur] \ar[dr]&&&&& {\color{purple}Ze_1 \otimes e_{d-1}Ze_{d-1}\langle d-1 \rangle }\\
& {\color{purple}\underline{Ze_{d-1} \otimes e_{d-1}Z e_{d-1} \langle 1\rangle}} \ar[r] &  
{\color{purple}Ze_{d-2} \otimes e_{d-1}Ze_{d-1}\langle 2 \rangle} \ar[r] & \cdots \ar[r]& 
{\color{purple}Ze_2 \otimes e_{d-1}Z e_{d-1}\langle d-2\rangle}  \ar[ur] & \\   
} }
\end{equation*}
where the differentials are as above. Note that again all maps pointing to the south-east in the complex are given by the tensor product of the identity of some $Ze_{d-j}$ with an injective map of vector spaces hence split. Thus, by Gaussian elimination, this complex is homotopy equivalent to the direct summand 
of the purple subcomplex for which the right tensor factor is restricted to multiples of $e_{d-1}$. This direct summand is 
indeed isomorphic to $X_0$. 

The remaining case of the action of $\Ev_{d-2,2-d}(\rT_{\rho})$ on $X_0$ can be replaced by considering the action of $\Ev_{d-2,2-d}(\rT_{\rho})^{-1}$ on $X_1$, which is analogous to the action of $\Ev_{d-2,2-d}(\rT_{\rho})$ on $X_{d-2}$.
\end{proof}

Similarly, we can define an additive birepresentation 
$\widehat{\mathbf{M}}_{r,s}$ of $\Sext_d$, for any $r,s\in \mathbb{Z}$. 

\begin{cor}\label{cor:fincover}
For any $r,s\in \mathbb{Z}$, there is a morphism of additive $\Sext_d$-birepresentations 
$\Phi\colon \widehat{\mathbf{M}}_{r,s}\to \mathbf{M}^{\Ev_{r,s}}$, induced by the embedding from Proposition~\ref{prop:invariant}. This makes $\widehat{\mathbf{M}}_{r,s}$ into a finitary cover of $\mathbf{M}^{\Ev_{r,s}}$.   
\end{cor}
\begin{proof}
All assertions follow immediately from Proposition~\ref{prop:invariant}.
\end{proof}

\begin{rem}
Note that $\widehat{\mathbf{M}}_{r,s}$ decategorifies to the Graham--Lehrer cell module 
$\widehat{M}_{d,\lambda}$ with $\lambda= (-1)^{s-(2-d)}q^{r-(d-2)}$, as can be 
easily seen by comparing the action of the generators on the $X_i$ with the decategorified action in \eqref{eq:GL} and \eqref{eq:GLext}. Moreover, $\Phi$ decategorifies to the projection of $\widehat{M}_{r,s}$ onto $L^{+}_{d,(-1)^{s-(2-d)}q^{r-(d-2)}}$.
\end{rem}

\begin{prop}\label{prop:isoalgs}
For any $r,s\in \mathbb{Z}$, there is an isomorphism of ungraded algebras 
\[
\mathrm{End}_{ \mathbf{M}^{\Ev_{r,s}}}(X_0\oplus\cdots\oplus X_{d-1})\cong \widehat{Z}.
\]  
\end{prop}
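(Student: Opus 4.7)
The plan is to construct an explicit algebra isomorphism $\Phi\colon\widehat{Z}\to\mathrm{End}_{\mathbf{M}^{\Ev_{r,s}}}(X_0\oplus\cdots\oplus X_{d-1})$ by specifying $\Phi$ on generators and verifying the defining relations of $\widehat{Z}$. Since both algebras have ungraded dimension $4d$, once such a well-defined injective (equivalently, surjective) $\Phi$ is built, it is automatically an isomorphism.

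Two ingredients drive the construction. First, for $i,j\in\{1,\ldots,d-1\}$ the objects $X_i=Ze_i$ are the indecomposable projective $Z$-modules underlying the finitary birepresentation $\mathbf{M}_d$, so $\mathrm{Hom}(X_i,X_j)\cong e_iZe_j$ with composition agreeing with multiplication in $Z$; this identifies the finite zigzag subalgebra $Z\subset\widehat{Z}$ with the $\{1,\ldots,d-1\}$-corner of the endomorphism algebra and pins down the generators $e_i,\ell_i$ for $1\le i\le d-1$ together with the arrows $i|(i\pm 1)$ not touching vertex $0$. Second, to treat morphisms involving $X_0$, I exploit that $\Ev_{r,s}(\rT_\rho)$ is an auto-equivalence of $\mathbf{M}^{\Ev_{r,s}}$ (since $\rT_\rho$ is invertible in $\Sext_d$) and that, by Proposition~\ref{prop:invariant}, it cyclically permutes $X_0,\ldots,X_{d-1}$. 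Tensoring with $\rT_\rho^k$ gives natural isomorphisms $\mathrm{Hom}(X_i,X_j)\cong\mathrm{Hom}(X_{i-k\bmod d},X_{j-k\bmod d})$; since $d\ge 3$, one can always choose $k$ so that both indices on the right lie in $\{1,\ldots,d-1\}$, reducing every remaining computation to the case already handled. This yields the correct Hom-space dimensions and furnishes explicit chain-map representatives for the new generators $e_0,\ell_0,0|1,1|0,0|(d-1),(d-1)|0$ by transporting the corresponding elements of $Z$ through $\rT_\rho^{\pm 1}$.

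What remains is to verify the defining relations. Those involving only vertices in $\{1,\ldots,d-1\}$ hold by the first ingredient; the zero relations $i|j|k=0$ and the loop identifications $i|(i+1)|i=i|(i-1)|i=\ell_i$ that touch vertex $0$ transport from their counterparts in $Z$ through the auto-equivalence. The single delicate point, which I expect to be the main obstacle, is the parity-dependent twisted identification $0|1|0=(-1)^d(0|(d-1)|0)$ in $\mathrm{End}(X_0)$. To handle it, I would compute both compositions as explicit chain maps $X_0\to X_0\langle 2\rangle$ and pair them with the homotopy-invariant functional $(c_0,\ldots,c_{d-2})\mapsto\sum_{k=0}^{d-2}(-1)^kc_k$ on this graded Hom-space, as already appears implicitly in the stability calculations for Proposition~\ref{prop:invariant}. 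The composition $0|1|0$ localizes as a loop at the last term $Ze_1\langle d-1\rangle$ of $X_0$ (position $d-2$), while $0|(d-1)|0$ localizes at the first term $Ze_{d-1}\langle 1\rangle$ (position $0$); the alternating signs of the invariant then force the ratio of the two compositions to be $(-1)^{d-2}=(-1)^d$, matching exactly the sign in the chosen presentation of $\widehat{Z}_d$ and completing the isomorphism.
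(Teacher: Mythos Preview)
Your proposal is essentially correct, and your treatment of the sign relation via the alternating-sum functional on chain maps $X_0\to X_0\langle 2\rangle$ is a nice alternative to the paper's argument. The paper proceeds more directly: it writes down explicit chain maps $p_{d-1}\colon X_0\to X_{d-1}$ (projection to homological degree $0$), $j_{d-1}\colon X_{d-1}\to X_0$ (multiplication by $\ell_{d-1}$), $j_1\colon X_1[2-d]\to X_0$ (inclusion in homological degree $d-2$), and $p_1\colon X_0\to X_1[2-d]$ (multiplication by $\ell_1$), and then verifies the twisted loop relation by exhibiting an explicit null-homotopy for $j_1p_1+(-1)^{d-1}j_{d-1}p_{d-1}$ whose $k$-th component is $r_{k|k+1}$. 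It does not invoke the $\rT_\rho$ auto-equivalence at all. Your transport argument is more conceptual and immediately gives the dimension count and most relations for free; the price is that when you assert that $0|1|0$ and $0|(d-1)|0$ ``localize'' at the two ends of $X_0$, you are implicitly assuming that the transported generators agree (up to scalar) with the paper's explicit $p_i,j_i$. This is true but needs a line of justification---either compute the transport through the Gaussian eliminations of Proposition~\ref{prop:invariant}, or simply \emph{define} the four new generators by the explicit formulas above and note afterwards that they are compatible with the $\rT_\rho$-action. Once that is done, your functional argument and the paper's null-homotopy are two sides of the same coin: the functional detects the unique homotopy class, while the paper's homotopy exhibits the specific relation in its kernel.
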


\begin{proof}
Without loss of generality, we assume that $(r,s)=(d-2,2-d)$, as before. Denote by $p_{d-1}\colon X_0 \to X_{d-1}$ the projection onto the component in homological degree $0$ and by $j_d\colon X_{d-1} \to X_0$ the map induced by multiplication with $\ell_{d-1}$.  Similarly, denote by $j_1\colon X_1[2-d] \to X_0$ the inclusion of the component in homological degree $d-2$ and by $p_1\colon X_0  \to X_1[2-d]$ the map induced by multiplication with $\ell_1$. We remark that $p_{d-1},j_{d-1}, j_1,p_1$ have degrees $1, 1, 1-d, d+1$, respectively. 
Moreover, we denote the maps $Ze_i \to Ze_{i\pm 1}$ given by right multiplication by $i\vert i\pm 1$ by $r_{i\vert i\pm 1}$. Then it is a straightforward calculation to verify that  $\mathrm{End}_{ \mathbf{M}^{\Ev_a}}(X_0\oplus\cdots\oplus X_{d-1})$ is given by the path algebra of the quiver
\begin{equation}
\xymatrix{
&&&&\overset{0}{\bullet}\ar@/_2pc/_{p_1}[ddllll]\ar@/^2pc/^{p_{d-1}}[ddrrrr]&&&&\\\\
\overset{1 }{\bullet}\ar@/^0.5pc/^{r_{1\vert 2}}[rr]\ar@/^1pc/_{j_1}[uurrrr]&& \overset{2}{\bullet}\ar@/^0.5pc/^{r_{2\vert 1}}[ll]&& \cdots && \overset{d-2}{\bullet} \ar@/^0.5pc/^{r_{d-2\vert d-1}}[rr]&& \overset{d-1}{\bullet}\ar@/^0.5pc/^{r_{d-1\vert d-2}}[ll]\ar@/_1pc/^{j_{d-1}}[uullll]
}
\end{equation}
modulo the relations defining $\widehat{Z}$ under the isomorphism sending $r_{i\vert i\pm 1}$ to $i\pm 1\vert i$, $p_i$ to $i\vert 0$ and $j_i$ to $0\vert i$ for $i\in \{1, d-1\}$. 
To verify the sign in the relation involving $0$ we observe that the endomorphism of $X_0$ given by $j_1p_1 + (-1)^{d-1}j_{d-1}p_{d-1}$ is (omitting shifts for readability) given by the solid arrows in the diagram
\begin{equation}
\xymatrix{Ze_{d-1}\ar[r]\ar_{\ell_{d-1}}[d]& Ze_{d-2}\ar^{0}[d]\ar^{r_{d-2\vert d-1}}@{-->}[dl] \ar[r]& \cdots\ar[r] &Ze_2\ar[r]\ar^{0}[d]\ar^{r_{2\vert 3}}@{-->}[dl] &Ze_1\ar^{\ell_1}[d]\ar^{r_{1\vert 2}}@{-->}[dl]\\
Ze_{d-1}\ar[r]& Ze_{d-2} \ar[r]& \cdots\ar[r] &Ze_2\ar[r] &Ze_1\\
}
\end{equation}
and is null-homotopic via the homotopy indicated by the dashed arrows.
\end{proof}

\begin{rem}
The natural bigrading of $\mathrm{End}_{ \mathbf{M}^{\Ev_{r,s}}}(X_0\oplus\cdots\oplus X_{d-1})$ induces a bigrading on $\widehat{Z}$ via the isomorphism in Proposition~\ref{prop:isoalgs}. Note that it does not depend on $(r,s)\in \mathbb{Z}^2$, 
as long as we keep the gradings of the $X_i$ fixed. Assuming that $(r,s)=(d-2,2-d)$, 
we see that it is given by 
\begin{eqnarray}
\deg(i\vert i+1) &= & (1,0), \quad  i\in I;\\
\deg(i\vert i-1) & = & (1,0), \quad i\in \widehat{I}\,\backslash \{1\};\\
\deg(0\vert 1)  & = &  (d+1,2-d);\\
\deg(1\vert 0) & = & (1-d, d-2).
\end{eqnarray} 
Note that the first entry of this bigrading is compatible with the above grading of $\widehat{Z}$ except for the degrees of the arrows between $0$ and $1$.
\end{rem}

The explicit $2$-action of $\Sext_d$ on $\widehat{\mathbf{M}}_{r,s}$ is given 
\begin{itemize}
\item on $1$-morphisms by
\begin{eqnarray}
F(i) & :=&\widehat{Z}_d e_i \otimes e_i\widehat{Z}_d\langle 1\rangle,\quad i \in \widehat{I};\\
F(\pm) & := & \widehat{Z}_d^{\tau^{\pm 1}}\langle r\rangle [s], 
\end{eqnarray}
\item on $2$-morphisms by
\begin{gather}
\begin{array}{lcrcl}
F\left(
\xy (0,0)*{
\tikzdiagc[scale=1]{
\begin{scope}[yscale=-.5,xscale=.5,shift={(5,-2)}] 
  \draw[ultra thick,blue] (-1,0) -- (-1, 1)node[pos=0, tikzdot]{} node[below]{\tiny $i$};
\end{scope}
}}
\endxy
\right)
& \colon & \widehat{Z}_d e_i \otimes e_i \widehat{Z}_d\langle 1 \rangle & \to  &\widehat{Z}_d \\
&& ae_i \otimes e_ib & \mapsto  & ae_ib,  
\end{array}\\
\begin{array}{lcrcl}
F\left(
\xy (0,0)*{
\tikzdiagc[scale=1]{
\begin{scope}[yscale=-.5,xscale=.5,shift={(5,-2)}] 
  \draw[ultra thick,blue] (-1,0) -- (-1, 1)node[pos=1, tikzdot]{} node[above, yshift=14pt]{\tiny $i$};
\end{scope}
}}
\endxy
\right)
& \colon &  \widehat{Z}_d  & \to  & \widehat{Z}_d e_i \otimes e_i \widehat{Z}_d\langle 1 \rangle \\
&& e_j & \mapsto  &
\begin{cases}
(-1)^i \left(\ell_i \otimes e_i + e_i \otimes \ell_i\right), & j=i;\\
(-1)^i\left(j\vert i \otimes i \vert j\right), & j\pm 1=i\ne 0;  \\
1\vert 0 \otimes 0 \vert 1, & j=1, i=0;\\
(-1)^d (d-1\vert 0 \otimes 0 \vert d-1), & j=d-1, i=0,
\end{cases}
\end{array}\\
\begin{array}{lcrcl}
F\left(\xy (0,0)*{
\tikzdiagc[scale=1]{
\begin{scope}[yscale=.5,xscale=.5,shift={(8,2)}] 
   \draw[ultra thick,blue] (0,0)-- (0, 1) node[above]{\tiny $i$}; 
  \draw[ultra thick,blue] (-1,-1) -- (0,0) node[below, shift={(-0.5,-0.5)}]{\tiny $i$}; 
\draw[ultra thick,blue] (1,-1) -- (0,0) node[below, shift={(0.5,-0.5)}]{\tiny $i$};
\end{scope}
}}
\endxy
\right)
&\colon & \widehat{Z}_d e_i \otimes e_i  \widehat{Z}_d e_i \otimes e_i  \widehat{Z}_d \langle 2 \rangle & \to 
&  \widehat{Z}_d e_i \otimes e_i  \widehat{Z}_d \langle 1 \rangle \\
&& e_i \otimes e_i ae_i \otimes e_i  &\mapsto & (-1)^i \mathrm{tr}(e_i a e_i) e_i \otimes e_i 
\\[2ex]
F\left(\xy (0,0)*{
\tikzdiagc[scale=1]{
\begin{scope}[yscale=.5,xscale=.5,shift={(8,2)}] 
   \draw[ultra thick,blue] (0,0)-- (0, 1) node[below, yshift=-14pt]{\tiny $i$}; 
  \draw[ultra thick,blue] (0,1) -- (-1,2) node[above]{\tiny $i$}; 
\draw[ultra thick,blue] (0,1) -- (1,2) node[above]{\tiny $i$};
\end{scope}
}}
\endxy
\right)
&\colon &  \widehat{Z}_d e_i \otimes e_i  \widehat{Z}_d \langle 1 \rangle 
&\to &  \widehat{Z}_d e_i \otimes e_i  \widehat{Z}_d e_i \otimes e_i  \widehat{Z}_d \langle 2 \rangle \\
&& e_i \otimes e_i &\mapsto & e_i \otimes e_i \otimes e_i.
\end{array}
\end{gather}
The generating $2$-morphisms involving an oriented black strand in 
\eqref{eq:orientedcoloredgens1} and \eqref{eq:orientedcoloredgens2} are sent to the 
isomorphisms in \eqref{eq:lefttau1} and \eqref{eq:lefttau3}, respectively,  
and all other generating $2$-morphisms are sent to zero.
\end{itemize}

\begin{rem}
We could alternatively have used the evaluation functor $\Ev'_{-r,-s}$ to obtain another evaluation birepresentation and its finitary cover.
\end{rem}





\vspace*{1cm}


\end{document}